\documentclass[11pt]{amsart}
\usepackage{amsmath,amssymb}
\usepackage{graphicx}

\usepackage{amsfonts,amscd,latexsym,epsfig, psfrag}

\usepackage{hyperref}
\usepackage[all]{xy}

\textwidth5.8in \textheight8in \voffset=-0.5in
\hoffset=-0.5in

\newcommand{\er}{{\Diamond}}

\newcommand{\Op}{{\rm Op}}

\newcommand{\cl}{{\rm cl}}

\newcommand{\s}{{\mathfrak s}}

\newcommand{\Tx}{{\Tilde x}}

  \newcommand{\uU}{{\underline{U}}}

\newcommand{\Aa}{{\mathcal A}}

\newcommand{\Mor}{{\rm Mor}}

\newcommand{\Obj}{{\rm Obj}}

\newcommand{\Stab}{{\rm Stab}}

\newcommand{\Tf}{{\Tilde f}}

\newcommand{\Kk}{{\mathcal K}}

\newcommand{\im}{{\rm im\,}}
\newcommand{\less}{{\smallsetminus}}

\newcommand{\p}{{\partial}}
\newcommand{\al}{{\alpha}}

\newcommand{\be}{{\beta}}

\newcommand{\de}{{\delta}}

\newcommand{\ga}{{\gamma}}
\newcommand{\Ga}{{\Gamma}}
\newcommand{\io}{{\iota}}

\newcommand{\la}{{\lambda}}
\newcommand{\La}{{\Lambda}}
\newcommand{\si}{{\sigma}}

\newcommand{\Br}{{\rm Br}}
\newcommand{\Fr}{{\rm Fr}}

\newcommand{\Bb}{{\mathcal B}}

\newcommand{\ov}{\overline}

\newcommand{\id}{{\rm id}}

\newcommand{\rT}{{\rm T}}

\renewcommand{\Tilde}{\widetilde}

\newcommand{\TV}{{\Tilde V}}

\newcommand{\Tsi}{{\Tilde \si}}

\newcommand{\Ii}{{\mathcal I}}

\newcommand{\Vv}{{\mathcal V}}

\newcommand{\Q}{{\mathbb Q}}
\newcommand{\R}{{\mathbb R}}

\newcommand{\Z}{{\mathbb Z}}

\newcommand{\Pp}{{\mathcal P}}
\newcommand{\Hh}{{\mathcal H}}

\newcommand{\SSS}{{\smallskip}}

\newcommand{\bB}{{\bf B}}

\newcommand{\bC}{{\bf C}}
\newcommand{\bG}{{\bf G}}
\newcommand{\bM}{{\bf M}}

\newcommand{\bE}{{\bf E}}
\newcommand{\bK}{{\bf K}}
\newcommand{\bV}{{\bf V}}

%q\newcommand{\bV}{{\bf V}}
\newcommand{\bX}{{\bf X}}

\newcommand{\bZ}{{\bf Z}}

%%%%%%%%%%%%%%%%%%%%%%%%%%%%%%%%%%%%%%%%%%%%%%%%
\newtheorem{theorem}{Theorem}[section]

\newtheorem{lemma}[theorem]{Lemma}

\newtheorem{prop}[theorem]{Proposition}

\newtheorem{defn}[theorem]{Definition}
\newtheorem{example}[theorem]{Example}

\newtheorem{remark}[theorem]{Remark}
\newtheorem{rmk}[theorem]{Remark}

\numberwithin{figure}{section}
\numberwithin{equation}{section}
%	\numberwithin{table}{section}
%%%%%%%%%%%%%%%%%%%%%%%%%%%%%%%%%%%%%%%%%%%%%%%%

\newcommand{\MS}{{\medskip}}

\newcommand{\NI}{{\noindent}}

\newcommand{\pr}{{\rm pr}}

   \newcounter{qcounter}

\newenvironment{itemlist}
   { \begin{list} {$\bullet$}
         {  \setlength{\itemsep}{.5ex} \setlength{\leftmargin}{2.5ex} } }
   { \end{list} }

\newcommand\quotient[2]{
        \mathchoice
            {% \displaystyle
                \text{\raise1ex\hbox{$#1$}\Big/\lower1ex\hbox{$#2$}}%
            }
            {% \textstyle
                #1\,/\,#2
            }
            {% \scriptstyle
                #1\,/\,#2
            }
            {% \scriptscriptstyle
                #1\,/\,#2
            }
    }

\newcommand\quot[2]{
                \text{\raise1ex\hbox{$#1$}/\lower1ex\hbox{$\scriptstyle#2$}}
  }

\newcommand\quo[2]{
                \text{\raise1ex\hbox{$#1\!\!$}/\lower1ex\hbox{$\!\scriptstyle#2$}}
  }

\newcommand\qu[2]{
                \text{\raise.8ex\hbox{$\scriptstyle#1\!$}/\lower.8ex\hbox{$\!\scriptstyle#2$}}
  }

\newcommand\qq[2]{
                \text{\raise.8ex\hbox{$#1\!$}/\lower.8ex\hbox{$#2$}}
}

 \title{Strict orbifold atlases and weighted branched manifolds}
 \author{Dusa McDuff}
  \thanks{partially supported by NSF grant DMS1308669}
\address{Department of Mathematics,
 Barnard College, Columbia University}
\email{dusa@math.columbia.edu}
\keywords{orbifold, groupoid, strict atlas, Kuranishi atlas, weighted branched manifold}
\subjclass[2010]{57R18, 53D45}
 \date{June 17, 2015, revised October 21, 2015}

\begin{document}
\maketitle

\begin{abstract}  
This note revisits some of the ideas in \cite{Mbr} on orbifolds and branched manifolds,  showing how the constructions can be simplified by using a version
of the Kuranishi atlases developed by McDuff--Wehrheim.
 We first show that every orbifold has such an atlas, and then use it to obtain an explicit model for the 
 nonsingular resolution of an oriented  orbifold $Y$ (which is a weighted nonsingular groupoid  
 with the same fundamental class as $Y$) and  for the
 Euler class of an oriented orbibundle.
%  thus a natural framework in which 
 %to construct  virtual moduli cycles  via polyfold theory. 
In this approach, instead of appearing as the zero set of a  multivalued section,  the Euler class
is the zero set 
 of a single-valued section of the pullback bundle over the resolution, 
 and hence has the structure of
 a weighted branched manifold in which the weights and branching are canonically defined by the atlas.
\end{abstract}

\section{Introduction}\label{s:orb}

A strict   orbifold atlas is a  special case of the Kuranishi atlases developed in \cite{MW1,MW2,MW3} by McDuff--Wehrheim to provide a framework for
the  construction of the virtual moduli cycle in Gromov--Witten theory.  When specialized to the orbifold case (i.e. all obstruction spaces are trivial), such an atlas
 encapsulates the structure of an \'etale proper (ep for short)
 groupoid in a  way that is well adapted to certain constructions, for example that of  
%D We illustrate our ideas by giving a simple construction for
 the Euler class of an orbibundle.  
 Although in this note we restrict attention to the finite dimensional case, our results 
about abstract orbifolds and their representing groupoids  (such as the construction of orbifold atlases, groupoid completions and reductions) 
apply in any setting in which there is an adequate topological and  analytical framework.  
In particular, as outlined in Remark~\ref{rmk:polyf} one should be able to use these ideas in the polyfold context of Hofer--Wysocki--Zehnder~\cite{HWZ}
to describe the zero set of a transverse perturbation of the canonical section of a Fredholm bundle as a weighted branched manifold.

The first section  defines the notion of a strict orbifold atlas, and gives examples showing how the structure  hidden in the morphisms of a groupoid is made explicit   in the atlas.  Such an atlas  $\Kk$ determines an ep category $\bB_\Kk$, which is not a groupoid because its morphisms are not all invertible.  Our main results are:
\begin{itemlist}\item  Proposition~\ref{prop:groupcomp}:  The category $\bB_\Kk$ has a unique completion to a groupoid with the same space of objects and realization, and hence determines a unique orbifold structure on  the realization $|\bB_\Kk|\cong Y$.
 \item Proposition~\ref{prop:orb}:  Conversely, every paracompact orbifold is the realization of a strict orbifold atlas, that is unique up to commensurability.
\end{itemlist}
%
%
%
%
%Proposition~\ref{prop:groupcomp} proves that every such category has a groupoid completion. 
%The second main result is  Proposition~\ref{prop:orb} which states that every compact orbifold has an orbifold atlas
% of Kuranishi type; 
%moreover  Morita equivalence classes of groupoids are in bijective correspondence with commensurability classes of atlases.
%We give an explicit construction of this atlas 
%in order  to provide a good framework for the kind of examples  discussed in the final section
In \S\ref{s:app}, we
 first use the atlas to construct the nonsingular resolution of an orbifold.  
 This is a weighted  \'etale groupoid with at most one morphism between any two objects,
  that also has a weighting function.  Thus its realization is a weighted  branched  manifold, 
  that, if compact and oriented, carries a fundamental class.  (See  Remark~\ref{rmk:cobordinvar} for a  discussion of further 
cobordism   invariants of weighted branched manifolds.)   We then  construct the Euler class of an 
oriented orbibundle over a compact oriented  base using  a single-valued section of the pullback of the bundle over a resolution 
rather than the more customary  multi-valued section.

\subsection{Definition and examples}

As in Adem--Leida--Ruan~\cite{ALR} and  Moerdijk~\cite{Moe} we take a naive approach to orbifolds, since that suffices for our current purposes.  considering them as equivalence classes of groupoids rather than 
as stacks or $2$-categories as in Lerman~\cite{Ler}.
Thus, we  define orbifolds via the concept of  
 {\bf  ep (\'etale proper) groupoid} $\bG$.  This is  a topological category whose
spaces of objects $\Obj_\bG$ and morphisms $\Mor_\bG$ are smooth manifolds\footnote
{
Manifolds are always assumed to be paracompact.} of some fixed dimension $d$, such that
\begin{itemize}\item  all structural maps (i.e. source $s$, target $t$, identity, composition and inverse)
 are {\bf  \'etale} (i.e. local diffeomorphisms);
and \item
 the map $s\times t: \Mor_\bG\to \Obj_\bG\times \Obj_\bG$
given by taking a morphism to its source and target is {\bf proper} (i.e. the inverse image of a compact set is compact).  
\end{itemize}
The {\bf realization} $|\bG|$ of $\bG$ is the quotient of the space of objects by the equivalence relation 
given by the morphisms: thus $x\sim y \ \Leftrightarrow\ \Mor_\bG(x,y)\ne \emptyset$.
 We denote the quotient map by $\pi_\bG: \Obj_{\bG}\to |\bG|$.
Note that, when (as here) the domains are locally compact, the
 properness condition implies that $|\bG|$ is Hausdorff. 
 We say that $\bG$ is
 \begin{itemize}\item
  {\bf effective} if the only connected components of $\Mor_{\bG}$ on which the source map $s$ equals the target map $t$ 
 consist entirely of identity morphisms;
 \item {\bf nonsingular} if $\Mor_{\bG}(x,y)$ contains at most one element for all $x,y\in \Obj_{\bG}$;
  \item {\bf oriented} if both manifolds  $\Obj_{\bG}$ and $\Mor_{\bG}$ carry an orientation that is preserved by all structural maps. 
 \end{itemize}

For example, if  a finite group $\Ga$  acts smoothly on a smooth manifold $U$ then naively one thinks of the quotient $ \qu{U}{\Ga}$ as an orbifold.
In this situation we define the ep groupoid $\bG_{(U,\Ga)}$ to have  $$
\Obj_\bG= U,\quad \Mor_\bG= U\times \Ga,\quad
(s\times t)(u,\ga)=(\ga^{-1}u, u),
$$
with the obvious identity, inverse  and composition maps. There is a map $f:U\to Y$ (the analog of the footprint map for a Kuranishi chart)
that induces a homeomorphism $f:  |\bG|=\qu{U}{\Ga} \to Y$.  More generally, we make the following definitions.

\begin{defn}\label{def:orbstr}
An {\bf orbifold structure} on a 
paracompact Hausdorff 
 space $Y$ is a pair $(\bG, f)$ consisting of an ep (\'etale proper) groupoid $\bG$ together with a 
map $f:\Obj_\bG\to Y$ that factors through a homeomorphism $|f|:|\bG|\to Y$.  
A {\bf refinement } of $(\bG,f)$ is an orbifold structure $(\bG'', f'')$ on $Y$
 together with a functor $F:  (\bG'', f'')\to (\bG,f)$ such that
\begin{itemize}\item $F$ is
 \'etale (i.e.  the induced maps on objects and morphism spaces are local diffeomorphisms); 
 \item $F$ is full and faithful, i.e.
 $F_*: \Mor_{\bG''}(x,y)\to \Mor_{\bG}\bigl(F(x),F(y)\bigr)$  is an isomorphism for all $x,y\in \Obj_{\bG''}$;
 \item  $f'' = f\circ F$.
  \end{itemize}
  Two orbifold structures $(\bG, f)$ and
$(\bG', f')$ are said to be {\bf Morita equivalent} if they have a common refinement, i.e. if there is a third structure $(\bG'', f'')$ on $Y$ and functors $F:\bG''\to\bG, F':\bG''\to \bG'$ as above.
  An {\bf orbifold} is a %D second countable
paracompact
Hausdorff space $Y$ equipped 
with an equivalence class of orbifold structures.  We say that $Y$ is {\bf oriented} if for each representing groupoid $\bG$
 the spaces  $\Obj_\bG$  and  $\Mor_\bG$ have  orientations that are preserved by  all structure maps and by the functors $F:\bG\to \bG'$ considered above.
\end{defn}

\begin{defn}\label{def:atl} A {\bf local chart} %D (or {\bf local uniformizer}) 
$(U, \Ga, \psi)$  %Dfor an (orientable) orbifold 
on a topological space $Y$ is a triple consisting of
 a connected  open subset $U\subset \R^d$, a finite group 
 $\Ga$ that acts by  diffeomorphisms of $U$
and a map $\psi: U\to Y$ that factors through a homeomorphism from the quotient $\uU: = \qu{U}{\Ga}$ onto an open subset $
F$ of $Y$ called the {\bf footprint}.  

If $Y$ is an orbifold, then in addition we  require
this chart (in this case also called a {\bf  local uniformizer}) to determine the smooth structure of $Y$
over $F$ in the sense that for one (and hence any) orbifold structure $(\bG,f)$ on $Y$ 
each $x\in f^{-1}(F)$ has a neighbourhood $V\subset f^{-1}(F)$ that is locally diffeomorphic to $(U,\Ga)$.  More precisely,
 if $\Ga_\bG^x: = \Mor_{\bG}(x,x)$, resp. $\Ga^x$,  is the stabilizer of $x$ in $\bG$, resp. $\Ga$,
then $f$ lifts to a map $\Tf:V\to U$ that is an embedding (i.e. a diffeomorphism onto its image)
and is such that
\begin{itemize}\item $\Tf$ is equivariant with respect to some isomorphism $\Ga_\bG^x\stackrel{\cong}\to \Ga^x\subset \Ga$ and 
\item  the induced map $\qu{V}{\Ga^x_\bG}\to \qu{U}{\Ga}$  is a homeomorphism to its image;
\item  if $Y$ is  oriented, then we also require $U$ to be oriented compatibly with all the above maps.
%and  that the action of $\Ga$ preserves an orientation on $U$.
\end{itemize}
  \end{defn}

It is well known that every 
orbifold $Y$ has a locally finite covering family of such charts  $\bigl(U_i,\Ga_i,\psi_i \bigr)_{i\in A}$; i.e. we have
$Y = \bigcup_{i\in A} \psi_i(U_i)$ and $\bigcap_{i\in I} \psi_i(U_i) \ne \emptyset \Longrightarrow |I|<\infty$.
% see 
% Moerdijk--Pronk~\cite[Corollary~1.2.5]{MPr} for the effective case, and 
% Robbin--Salamon~\cite[Lemma~2.10]{RobS}  in general.
 Indeed, given any representing groupoid $(\bG,f)$  Robbin--Salamon~\cite[Lemma~2.10]{RobS}
  construct a covering family  from $\bG$ 
 in the sense that each $U_i$ is a subset of $\Obj_{\bG}$ such that 
 the full subcategory of $\bG$ with objects $U_i$ is isomorphic to the category
 $\bG_{(U_i,\Ga_i)}$ defined above. 
%  $U_i\times \Ga_i$ and source, target and footprint maps given by
%  $$
%  s\times t: U_i\times \Ga_i\to U_i,\;\;\;      (x,\ga)\mapsto (\ga^{-1}x,  x),\qquad \psi_i := f|_{U_i}: U_i\to Y.
%  $$
 Although, in this situation the covering family in some sense  generates the groupoid $\bG$, 
 there could be many components in $\Obj_{\bG}$ and $\Mor_\bG$  that we know very little about. 
 We might ask: what is the minimal extra structure  needed to determine the orbifold structure on $Y$?
  
  We will see that 
the following notion gives a simple answer to this question.

\begin{defn}\label{def:orbat}  A %We say that a compact Hausdorff space $Y$ has an
 {\bf strict orbifold atlas} % of Kuranishi type} 
 $\Kk = \bigl(\bK_I, \rho_{IJ}\bigr)_{I\subset J\in \Ii_Y}$ on a paracompact Hausdorff space $Y$ consists of the following data:
\begin{itemlist} 
\item[\rm (i)]
a locally finite  open cover $(F_i)_{i\in A}$ of $Y$, with associated set 
 $\Ii_Y: =  \bigl\{I\subset  A
:F_I: ={\textstyle  \bigcap_{i\in I} }F_i\ne \emptyset\bigr\}
$;
\item[\rm (ii)] a collection $\bigl(W_I,\Ga_I,\psi_I\bigr)_{I\in \Ii_Y}$ of local charts 
where  $\Ga_I: = \prod_{i\in I} \Ga_i$
%
% such that
% the following holds with
%$$
%\Ii_Y: = \bigl\{I\subset  A
%% \{1,\dots,N\}
%:F_I: ={\textstyle  \bigcap_{i\in I} }F_i\ne \emptyset\bigr\}.
%$$
%%\vspace{-.15in}
%%\begin{itemlist} 
%\item[\rm (ii)]
%For each $I\in \Ii_Y$ there is 
%  a local chart $(W_I,\Ga_I: = \prod_{i\in I} \Ga_i,\psi_I)$
   with footprints $\psi_I(W_I) = F_I$ such that
when $|I|>1$  the group $\Ga_{I\less \{ i\}}$ acts freely on
$W_I$  for each $i\in I$; and
%there is an oriented  manifold $W_I$ on which $\Ga_I: = \prod_{i\in I} \Ga_i$ acts smoothly  and 
%a map $\psi_I: W_I\to F_I$ that induces a homeomorphism $\upsi_I: \qq{W_I}{\Ga_I}\stackrel{\cong}\to F_I$; we assume also that for all $i\in I\in \Ii_Y$, 
%;
\item[\rm (iii)]  a family of 
 smooth local diffeomorphisms (or covering maps)
$$
\rho_{IJ}: W_J\to W_{IJ}: = (\psi_I)^{-1}(F_J)\subset W_I, \qquad I\subset J,\; I,J\in \Ii_Y,
$$
satisfying the following conditions for all $I\subset J, \; I,J\in \Ii_Y$:
\begin{itemize} \item[(a)]  $\rho_{JJ} = \id$;
 \item[(b)]  if $I\subsetneq J$ then  $\rho_{IJ}$ is 
 equivariant with respect to the projection
$\rho_{IJ}^\Ga: \Ga_J\to \Ga_I$, and is given by  
 the composite of the quotient of $W_J$ by the free action of 
$\Ga_{J\less I}$  with a $\Ga_I$-equivariant diffeomorphism $\qu{W_J}{\Ga_{J\less I}}\to W_{IJ}\subset W_I$;
\item[(c)]   $\psi_I\circ \rho_{IJ} = \psi_J$, and $\rho_{IJ}\circ \rho_{JK} = \rho_{IK}$ for all $I\subset J\subset K$.
\end{itemize}
\end{itemlist}
The {\bf charts} of this atlas $\Kk$ are the tuples $\bigl(\bK_I: = (W_I,\Ga_I,\psi_I)\bigr)_{I\in \Ii_Y}$ with {\bf footprints }
$(F_I)_{I\in \Ii_Y}$ and {\bf footprint maps} $\psi_I$, and the {\bf coordinate changes} are induced by the covering maps $\rho_{IJ}$.
\end{defn}

It is often useful to think of the charts $(\bK_i:=\bK_{\{i\}})_{i\in A}$ as the {\bf basic charts}, while the $\bK_I$ with $|I|>1$ are {\bf transition charts} that define how the basic charts fit together. For short, we will often call an atlas with the above properties 
an orbifold atlas.  
\footnote
{
  We warn the reader that an orbifold atlas (or good atlas) is customarily defined to be
 a covering family of charts that satisfy a somewhat different compatibility condition on overlaps; see 
for example \cite{ALR,MPr, Mbr}.
}

\begin{rmk}\rm (i) It is not hard to check that the projections $(\rho_{IJ}, \rho_{IJ}^\Ga):W_J\to W_{IJ}$ (which are called {\bf group coverings} in \cite[\S2.1]{MW3}) induce isomorphisms on the stabilizer 
subgroups, i.e. if $x = \rho_{IJ}(y)$ then $ \rho^\Ga_{IJ}: \Ga_J^y\stackrel{\cong}\to \Ga_I^x$.\MS

\NI (ii)  By slight abuse of language, we often call the group $\Ga_I$  the {\bf isotropy group} of the chart $\bK_I$, even though 
in general it does not  equal the stabilizer subgroup $\Ga_I^x$  of any point $x\in W_I$.   
Although one could insist that the basic charts $(W_i,\Ga_i, \psi_i)$ are {\bf minimal} in the sense that $\Ga_i = \Ga_i^x$ for some $x\in W_i$, this property is not preserved by arbitrary  restrictions to $\Ga_i$-invariant subsets of $W_i$  and also, because the groups $\Ga_{J\less I}$ act freely,  will usually not hold for the transition charts.  
One can think of  $\Ga_I$ as the automorphism group (or stabilizer) of the footprint map $\psi_I: W_I\to Y$ 
% we can identify $\Ga_I$ with the group $\Mor_{\bG_\Kk}(W_I,W_I)$ of self-morphisms of  $W_I$. An analogous statement holds  for any orbifold structure on $Y$; in other words, 
  in an appropriately defined category of \lq\lq stacky" maps $(W,\psi)$ from manifolds $W$ to the orbifold $Y$.
%we can interpret $\Ga_I$ as the automorphism group of this map.
\hfill$\er$
\end{rmk}

As we show in Proposition~\ref{prop:groupcomp} below, the above notion of atlas on the topological space $Y$  is sufficient to
give a complete description of its  structure as an orbifold.  In particular,
as in \cite[Definition~2.3.5]{MW3}, each such  atlas $\Kk$  defines a category $\bB_\Kk$ with $\Obj_{\bB_\Kk} = \bigsqcup_{I\in \Ii_Y} W_I$ and
morphisms  $\Mor_{\bB_\Kk} = \bigsqcup_{I\subset J, I,J\in \Ii_Y} W_J\times \Ga_I$, with source and target given by
\footnote
{
In hindsight, it might have been more natural to consider the tuple $(I,J,y,\gamma)$ as a morphism with source  $(J,y)$ rather than $(I,\ga^{-1}\rho_{IJ}(y))$ since the only way to obtain a smooth parametrization of the morphisms trom $W_I$ to $W_J$ is to parametrize them by the points in $W_J$.   However we will follow the conventions in the papers \cite{MW1,MW2,MW3}. Note also that below  we write compositions in the categorical ordering. 
} 
\begin{equation}\label{eq:Bcomp}
\Ii_Y\times \Ii_Y\times W_J\times \Ga_I\;\ni\;(I,J,y,\gamma)\in \Mor_{\bB_\Kk}\bigl((I,\ga^{-1} \rho_{IJ}(y)), (J, y)\bigr).
\end{equation}
Composition is defined by
\begin{equation}\label{eq:Bcomp1}
\bigl(I,J,y, \ga\bigr)\circ \bigl(J,K,z,\de\bigr)
:= \bigl(I,K,z,
\rho^\Ga_{IJ}(\de) \ga \bigr) \quad \mbox{ if } \de^{-1} \rho_{JK}(z)=y.
\end{equation}

The {\bf realization} $|\bB_\Kk|$ of the category $\bB_\Kk$ is defined to be the quotient $\Obj_{\bB_\Kk}/\!\!\sim$, where $\sim$ is the equivalence relation on objects generated by setting $x\sim y$ whenever $\Mor(x,y)\ne \emptyset$.   The following lemma is a special case of \cite[Lemma~2.3.7]{MW1}.  Its proof is elementary. 

\begin{lemma}  The category $\bB_\Kk$ is well defined; in particular, 
composition  is associative.   It is \'etale and proper.
Moreover, the footprint maps $\psi_I$ induce a homeomorphism  $|\psi|: |\bB_\Kk| \to Y$.
\end{lemma}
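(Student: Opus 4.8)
The plan is to verify each assertion in the order stated, treating the category structure as essentially bookkeeping and reserving the real work for the topological claims about $|\bB_\Kk|$.

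First I would check that $\bB_\Kk$ is a well-defined category. The source and target of $(I,J,y,\ga)$ given in \eqref{eq:Bcomp} make sense because $\rho_{IJ}(y)\in W_{IJ}\subset W_I$ and $\Ga_I$ acts on $W_I$. For associativity of the composition in \eqref{eq:Bcomp1}, I would compose three composable morphisms $(I,J,y,\ga)\circ(J,K,z,\de)\circ(K,L,w,\eps)$ both ways and use that $\rho^\Ga$ is functorial, i.e. $\rho^\Ga_{IJ}\circ\rho^\Ga_{JK}=\rho^\Ga_{IK}$ (this follows from Definition~\ref{def:orbat}(iii)(b)--(c) applied to the projections $\Ga_L\to\Ga_K\to\Ga_J\to\Ga_I$, which are genuine group homomorphisms and hence compose strictly), together with $\rho_{IJ}\circ\rho_{JK}=\rho_{IK}$ from (iii)(c). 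The compatibility of the composability conditions $\de^{-1}\rho_{JK}(z)=y$, $\eps^{-1}\rho_{KL}(w)=z$ with the resulting single condition is then a direct substitution using the equivariance of $\rho_{JK}$ with respect to $\rho^\Ga_{JK}$. Identities are the morphisms $(I,I,x,1)$, using $\rho_{II}=\id$ from (iii)(a).

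Next, \'etaleness: $\Obj_{\bB_\Kk}=\bigsqcup_I W_I$ and $\Mor_{\bB_\Kk}=\bigsqcup_{I\subset J}W_J\times\Ga_I$ are smooth manifolds (disjoint unions of open subsets of $\R^d$, times finite sets), and the source map sends $(I,J,y,\ga)$ to $\ga^{-1}\rho_{IJ}(y)$, which is a local diffeomorphism since $\rho_{IJ}$ is one (by hypothesis in (iii)) and $\ga^{-1}$ acts diffeomorphically; the target map $(I,J,y,\ga)\mapsto y$ is simply a projection, hence \'etale. Identity, inverse and composition maps are checked to be \'etale in the same routine way. For properness of $s\times t$, I would argue as follows: given a compact $Q\subset\Obj\times\Obj$, it meets only finitely many of the pieces $W_I\times W_J$ (by local finiteness of the cover, since nonempty overlaps $F_I\cap F_J$ require $I\cup J\in\Ii_Y$, and only finitely many $I$ are relevant near any point of $Y$), and on each such piece the preimage lives in $W_J\times\Ga_I$ with $\Ga_I$ finite, so it suffices to bound the $W_J$-coordinate and the $W_I$-coordinate; but $s\times t$ restricted there factors through $\psi_I\times\psi_J$ composed with projections to $Y$, and the footprint condition $\psi_J=\psi_I\circ\rho_{IJ}$ together with properness of the chart maps (the quotient maps $W_I\to F_I$ are proper onto their images since $\Ga_I$ is finite and the footprints are locally compact) forces the preimage to be compact. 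I expect this properness step to be the main obstacle, since it requires carefully combining local finiteness of the cover with the finiteness of the groups and the behavior of the $\rho_{IJ}$ near the boundary of $W_{IJ}$ inside $W_I$ --- the point being that $W_{IJ}$ need not be closed in $W_I$, so one must use that a compact subset of $Y$ is covered by finitely many $F_i$ and pull back accordingly.

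Finally, the homeomorphism $|\psi|:|\bB_\Kk|\to Y$. The maps $\psi_I$ agree on overlaps in the sense dictated by the morphisms --- precisely, $\psi_I(\ga^{-1}\rho_{IJ}(y))=\psi_I(\rho_{IJ}(y))=\psi_J(y)$ by (iii)(c) and $\Ga_I$-invariance of $\psi_I$ --- so they descend to a continuous map $|\psi|:|\bB_\Kk|\to Y$. Surjectivity is immediate since the $F_i=\psi_i(W_i)$ cover $Y$. For injectivity I would show that if $\psi_I(x)=\psi_J(y)$ then $x$ and $y$ are identified in $|\bB_\Kk|$: the common image point lies in $F_I\cap F_J$, hence $I\cup J=:K\in\Ii_Y$, and choosing $z\in W_K$ with $\psi_K(z)$ equal to that point, the morphisms $(I,K,z,\cdot)$ and $(J,K,z,\cdot)$ connect $x$ and $y$ through $z$ (after adjusting by suitable group elements, which exist because $\rho_{IK}$ and $\rho_{JK}$ are surjective onto $W_{IK}$ and $W_{JK}$ and induce the quotients by $\Ga_{K\less I}$, $\Ga_{K\less J}$ respectively). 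It remains to see $|\psi|$ is open (equivalently a homeomorphism): the quotient map $\pi:\Obj_{\bB_\Kk}\to|\bB_\Kk|$ is open because the equivalence relation is generated by \'etale morphisms, and $\psi_I:W_I\to Y$ is open since it factors through the open quotient $W_I\to W_I/\Ga_I\cong F_I$ and $F_I$ is open in $Y$; hence $|\psi|$ carries the open set $\pi(\mathcal{O})$ to $\bigcup_I\psi_I(\mathcal{O}\cap W_I)$, which is open. This completes the proof.
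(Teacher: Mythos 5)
The paper does not actually prove this lemma --- it cites \cite[Lemma~2.3.7]{MW1} and calls the proof elementary --- so there is nothing to compare against except the obvious direct verification, which is what you carry out. Your treatment of the category axioms, of \'etaleness, and of the homeomorphism $|\psi|$ (descent via $\psi_I\circ\rho_{IJ}=\psi_J$ and $\Ga_I$-invariance, injectivity via a lift to $W_{I\cup J}$ adjusted by a group element, openness of $\pi$ and of the $\psi_I$) is correct and is the intended argument. One small slip: $\bB_\Kk$ has no inverse map, so there is no \'etaleness of inversion to check.

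The one place where your reasoning goes astray is exactly the step you flag as the main obstacle, properness, and it goes astray because you make it harder than it is. The route through $Y$ does not work: $\psi_J:W_J\to Y$ is proper only as a map onto the \emph{open} set $F_J$, so the preimage under $\psi_J$ of a compact subset $K\subset Y$ is $\psi_J^{-1}(K\cap F_J)$, which is a closed but generally non-compact subset of $W_J$; moreover $(\psi_I\times\psi_J)\circ(s\times t)$ lands on the diagonal of $Y\times Y$, so it gives no more information than $\psi_J(y)$ alone. Your worry about $W_{IJ}$ not being closed in $W_I$ is likewise a red herring. The correct argument is immediate from the formula $t(I,J,y,\ga)=y$: if $Q\subset\Obj_{\bB_\Kk}\times\Obj_{\bB_\Kk}$ is compact, then $(s\times t)^{-1}(Q)$ is closed (continuity into a Hausdorff space) and is contained in $t^{-1}\bigl(\pr_2(Q)\bigr)$; a compact subset of the disjoint union $\bigsqcup_J W_J$ meets only finitely many components $W_J$, for each such $J$ only the finitely many subsets $I\subset J$ index components of $\Mor_{\bB_\Kk}$ with target in $W_J$, and on each such component $t$ is the projection $W_J\times\Ga_I\to W_J$, which is proper since $\Ga_I$ is finite. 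Hence $t^{-1}(\pr_2(Q))$ is compact and $(s\times t)^{-1}(Q)$ is a closed subset of it. No appeal to local finiteness of the footprint cover or to the maps $\psi_I$ is needed.
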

%\begin{proof}  The associativity of the composition follows immediately from the
% identities $\rho_{IJ}\circ \rho_{JK} = \rho_{IK}$ and $\rho^\Ga_{IJ}\circ \rho^\Ga_{JK} = \rho^\Ga_{IK}$.
% It is \'etale because covering maps are local diffeomorphisms, and   proper because the map
% $t: W_J\times \Ga_I\to W_J$ is proper.
%The equivariance of $\rho_{IJ}$ implies that it induces a homeomorphism $\urho_{IJ}: \qu{W_J}{\Ga_J}\to \qu{W_{IJ}}{\Ga_I}$ and  (iii) 
%implies that the following
% diagram
%$$
%\xymatrix
%{
%W_J \ar@{->}[d]_{\rho_{IJ} }\ar@{->}[r] & \qu{W_J}{\Ga_J}  \ar@{->}[d]_{\urho_{IJ} }\ar@{->}[r]^{\upsi_J} &F_J \ar@{->}[d]_{\io_Y}  \\
%W_{I} \ar@{->}[r]  & \qu{W_{IJ}}{\Ga_I}\subset  \qu{W_{I}}{\Ga_I}  \ar@{->}[r] ^{\;\;\;\;\;\;\;\;\;\upsi_I} &F_I,
%}
%$$
%commutes, where $\io_Y: F_J\to F_I$ is the inclusion of subsets of $Y$.
%It follows that there is a continuous bijection $|\bB_\Kk|\to Y$.  Since $Y$ is compact and Hausdorff  by hypothesis,  this must be a homeomorphism. 
%\end{proof}

\begin{example}\label{ex:triv}\rm {\bf (Manifolds)}  Every manifold\footnote
{
assumed paracompact}
 $Y$ is the realization of the \'etale proper (ep) category 
$\Op(Y)$ with objects equal to the disjoint union $\bigsqcup_{\al\in \Aa} U_\al$ of all open subsets of $Y$ and morphisms
given by inclusion. Thus if $\io_\al:U_\al\to Y$ is the inclusion and we order the elements of $\Aa$ by the inverse inclusion relation
so that $\al\le \be \Longrightarrow \im(\io_\al) \supset \im(\io_\be)$, then
 $\Mor_{\Op(Y)} = \bigsqcup_{\al\le \be}  U_\be$ with source and target given by
 $$
 (\al,\be,x): (\al, \io_\al^{-1} \circ \io_\be(x))\mapsto (\be, x),\quad \al\le \be , \; x\in U_\be.
 $$
Every locally finite open covering  $(W_i)_{i\in A} $ of $Y$ defines an atlas on $Y$ with trivial  isotropy
 groups $\Ga_I$ whose corresponding category $\bB_\Kk$ is a full subcategory of $\Op(Y)$.  
However, Definition~\ref{def:orbat} also allows for atlases on $Y$ with nontrivial  isotropy 
groups $\Ga_I$.  The condition for $Y$ to be a manifold is that all stabilizer subgroups $\Ga_I^x: = \{\ga\in \Ga_I \ | \ \ga(x) = x\}$ of the points $x\in W_I$ are trivial; in other words, each group $\Ga_I$ must act freely on $W_I$ so that the footprint maps $\psi_I: W_I\to Y$ are local homeomorphisms.
Since $\Ga_I : = \prod_{i\in I}\Ga_i$,  the assumptions on the covering maps $\rho_{IJ}$ imply that this will hold for all charts provided 
 that it holds for  the basic charts.  
 \hfill$\er$
\end{example}

\begin{example}\label{ex:foot}\rm  (i)
A first nontrivial example is a ``football" $Y = S^2$ with two basic  charts $(W_1, \Ga_1=\Z_2, \psi_1)$, $(W_2, \Ga_2=\Z_3,\psi_2)$ 
that parametrize neighbourhoods $\psi_i(W_i) = F_i\subset S^2$ of the northern resp.\ southern hemisphere with isotropy of order $2$ resp.\ $3$ at the north resp.\ south pole.
The restrictions of the basic charts to the annulus $F_{12}: =F_1\cap F_2$  have domains given by the annuli $W_{i(12)}: = \psi_i^{-1}(F_{12})$
that each support a free action of the relevant  group $\Ga_i$.
There is no direct functor between these restrictions because the coverings $W_{1(12)}\to F_{12}$ and $W_{2(12)}\to F_{12}$ are incompatible.  
However, they can be related by a common free covering, namely the pullback defined by the diagram
\begin{equation}\label{W12}
\xymatrix{
W_{12}   \ar@{-->}[d] \ar@{-->}[r]   & W_{1(12)} \ar@{->}[d]^{\psi_1}   \\
W_{2(12)} \ar@{->}[r]^{\psi_2}  & Y.
}
\end{equation}
Thus  $W_{12} := \{(x,y)\in W_{1(12)}\times W_{2(12)} \,|\, \psi_1(x) = \psi_2(y)\}$
with group $\Ga_{12}: = \Ga_1\times \Ga_2 = \Z_2 \times \Z_3$.
The corresponding footprint map $\psi_{12}:W_{12}\to F_{12}$ is the $6$-fold covering of the annulus, and the  coordinate changes from $(W_i,\Ga_i,\psi_i)|_{F_{12}}$ to $(W_{12},\Ga_{12},\psi_{12})$ are the coverings $W_{12} \to  W_{i(12)	}$ in the diagram.
Therefore the category $\bB_\Kk$ in this example has index set $\Ii_Y = \{1,2, 12\}$, 
objects the disjoint union $\bigsqcup_{I\in \Ii_Y} W_I$, 
and
morphisms
$$ 
\Bigl({\textstyle  \bigsqcup_{I\in \Ii_Y}} W_I\times \Ga_I  \Bigr) \cup \Bigl({\textstyle  \bigsqcup_{i=1,2} W_{12}\times \Ga_i  }\Bigr) ,
$$
where for $i=1,2$ the elements in $W_{12}\times \Ga_i$ represent the morphisms from $W_{i} $ to $W_{12}$.
\MS

\NI (ii)
The \lq\lq simplest" groupoid $\bG$ with $|\bG| = Y$ would have objects $W_1\sqcup W_2$ and the following  morphisms:% space given by 
\begin{itemize} \item  morphisms from $W_i$ to itself parametrized by 
$ W_i\times \Ga_i$; 
\item morphisms from $W_1$ to $W_2$ parametrized by $W_{12}$ with $$
s\times t: W_{12}\to W_{1(12)}\times W_{2(12)},
 x\mapsto \bigl(\rho_{1(12)}(x), \rho_{2(12)}(x)\bigr);
 $$
 \item another copy of $W_{12}$ representing the inverses of these morphisms.
\end{itemize}
The fact that this groupoid has such a simple description is a consequence of the existence of the pullback diagram~\eqref{W12}.  
However, even in this case it is not so easy to give an explicit formula for the composition $\Mor(W_1,W_2)\circ \Mor(W_2,W_1) \to \Mor(W_1,W_1) = W_1\times \Ga_1$, which is necessary if one wants to describe a groupoid rather than a category.
In the atlas, the space $W_{12}$ is considered as another component of the object space, which
firstly allows us to order  the components $W_I$ of the object space  so that we need not consider all 
morphisms but only those from $W_I$ to $W_J$ with $I\subset J$, and secondly
allows us to  replace the space of direct  morphisms from $W_1$ to $W_2$ 
by the space of morphisms from $\sqcup_i W_i$ to $W_{12}$, thus decomposing the morphisms from $W_1$ to $W_2$ into constituents
 that are easier  to describe.
 \hfill$\er$
\end{example}

The
simple construction in Example~\ref{ex:foot} does not work for arbitrary orbifolds since the (set theoretic) pullback $W_{12}$ considered above will not be a smooth manifold if any point in 
$\psi_1(W_1)\cap \psi_2(W_2)$ 
has nontrivial stabilizer.  
However, it turns out that there is a very simple substitute construction for the domain of the transition  chart.  Namely,  if the charts $(W_i,\Ga_i)$ inject into a groupoid representative for $Y$ then we can take $W_{12}$ to be the morphisms in this groupoid from $W_1$ to $W_2$; see  the proof of Proposition~\ref{prop:orb}.  This morphism space is the \lq\lq stacky" analog of the fiber product;  see Pardon~\cite[\S2.1.2]{Pard}, who also observes 
that this can be used to construct orbifold atlases.
\MS

\begin{remark}\label{rmk:var} \rm {\bf (Variations on the definition)} In certain geometric situations, such as 
the case of products discussed in Example~\ref{ex:prod} below, it is natural to generalize the definition of atlas
\footnote
{
The requirements below are similar to, but simpler than, the conditions in \cite{Mcn} for a \lq\lq semi-additive atlas": there we also had to take into consideration additivity requirements for the obstruction spaces.}
 to allow
for  the possibility that  the indices  $i\in A$ associated to the footprints $F_i$ and groups $\Ga_i$ of the basic
covering family of $Y$ do not all correspond to local uniformizers of $Y$, though there are enough charts with footprints equal to 
intersections $F_I$ to cover $Y$.
 
Thus we define  a {\bf generalized orbifold atlas} to consist of a locally finite open cover $\{F_i\}_{i\in A}$ of $Y$, a family of finite groups $(\Ga_i)_{i\in A}$, and a subset $\Ii_Y\subset \Pp^*(A)$ of the set  
of finite nonempty subsets of $A$ satisfying the following conditions:
\begin{itemize}\item  $I\in \Ii_Y\Longrightarrow F_I: = \bigcap_{i\in I} F_i\ne \emptyset$;
\item  if $I\in \Ii_Y$ and $ I\subset J$  then $J\in \Ii_Y$ if and only if $F_J\ne \emptyset$;
\item $\bigcup_{I\in \Ii_Y} F_I = Y$;
\item for each $I\in \Ii_Y$ there is a local chart $(W_I,\Ga_I,\psi_I)$ with group $\Ga_I: = \prod_{i\in I}\Ga_i$
and footprint $F_I$;
\item  the family of charts $(W_I,\Ga_I,\psi_I)_{I\in \Ii_Y}$ also satisfy conditions (ii), (iii) in the Definition~\ref{def:orbat} of an orbifold atlas.
\end{itemize}
Pardon's notion of an implicit atlas is yet more general, since he does not insist that the domains of his charts are manifolds. As he explains in 
\cite[Remarks~2.1.3,~2.1.4]{Pard}, his definitions are in some respects simpler.  However,  we need an explicit description of the \'etale category $\bB_\Kk$ in order to be able to perform certain geometric constructions,
such as the construction of a perturbation section in \cite[\S7.3]{MW2}, or the nonsingular resolution below.
\hfill$\er$
\end{remark}

\begin{example}\label{ex:prod}\rm {\bf (Products)} Consider the product $Y= Y_1\times Y_2$  of two orbifolds, where $Y_\al$
is equipped with the
 atlas $\Kk_\al = (W^\al_I,\Ga^\al_I,\psi^\al_I)_{I\in \Ii_{Y_\al}} $ 
 with basic charts indexed by the elements of $A_{\al}$. %D: = \{1,\dots,N_\al\}$.  
 Then the 
 family of product charts
 $$
 \bigl(W^1_{I_1}\times W^2_{I_2}, \Ga^1_{I_1}\times \Ga^2_{I_2} , \psi^1_{I_1}\times \psi^2_{I_2}\bigr), \quad (I_1,I_2)\in \Ii_{Y_1}\times \Ii_{Y_2}  
 $$
is a generalized atlas indexed by $\Ii_Y\subset \Pp^*(A_1\sqcup A_2)$, where $\Ii_Y: = \{(I_1,I_2): I_\al\in \Ii_{Y_\al}\}$.
   Here 
we take $A $ to be the disjoint union $A_1\sqcup A_2$, 
denoting the elements in $A_1$ by pairs $(i,\emptyset)$ for $i\in A_1$ and those of $A_2$ by $(\emptyset, j)$ for $j\in A_2$. 
If we write the  elements of $\Pp^*(A)$ as  pairs $(I_1,I_2)$, where $I_\al\in \Pp(A_\al)$ are not both empty, then 
$\Ii_Y$ consists of pairs $(I_1,I_2)$ where neither set $I_\al$ is empty.
On the other hand, we can define footprints corresponding to all nonempty subsets of $A$ as follows: define
%The basic open cover  $(F_i)_{i\in A}$ then consists of the sets
$$
F_{i,\emptyset}: = F^1_{i}\times Y_2,\;\; i\in A_1\;\;\mbox{  and }\;F_{\emptyset j}: = Y_1\times F^2_{j},\;\; j\in A_2,
$$
and then set
$F_{I_1,I_2} = F^1_{I_1}\times F^2_{I_2} = \bigcap_{i\in I_1} F_{i,\emptyset} \cap \bigcap_{j\in I_2} F^2_{\emptyset,j} 
$
Similarly, we can define $\Ga_{i,\emptyset}: = \Ga^1_i, \Ga_{\emptyset, j}: = \Ga^2_j$, and then define the other $\Ga_{I_1,I_2}$ as products of these groups.

In Pardon's approach, one can
 include a \lq\lq chart" 
that is indexed by the empty set, namely  $(W_\emptyset:=Y,\Ga_{\emptyset} = \id, \psi_{\emptyset} = \id_Y)$, and then include product  charts of the form $Y_1 \times (W_J,\Ga_J,\psi_J)$ as part of the atlas.   
\hfill$\er$
\end{example}

See Example~\ref{ex:gerbe} for a description of some atlases on noneffective orbifolds.

\section{Groupoid completions}

Although $\bB_\Kk$ is not a groupoid since some of the nonidentity maps are not invertible, 
we now show that this category has a canonical {\bf groupoid completion} $\bG_\Kk$. 
(This justifies our language since it implies that any paracompact Hausdorff space $Y$ with an orbifold atlas is in fact an orbifold.)

\begin{defn}\label{def:groupc}  Let $\bM$ be an \'etale proper  category with objects $\bigsqcup_{I\in \Ii} W_I$ 
and realization $Y : = \Obj_\bM/\!\sim$ such that 
\begin{itemize}\item
for each $I\in \Ii$ the full subcategory of $\bM$ with objects $W_I$ 
can be identified with the group quotient $(W_I,\Ga_I)$ for some group $\Ga_I$;
\item for each $I\in \Ii$ the realization map $\pi_\bM:\Obj_\bM \to Y$ induces a homeomorphism
  $\qu{W_I}{\Ga_I}\to F_I\subset Y$, where $F_I$ is an open subset of $Y$.
\end{itemize}
 Then we say that an ep groupoid $\bG$ is a {\bf groupoid completion} of $\bM$ if
 there is an injective functor $\io:\bM\to \bG$ that induces a bijection on objects, an isomorpyhism on stabilizer subgroups,  and 
 a homeomorphism on the realizations $Y= |\bM|\to |\bG|$. 
\end{defn}

Thus 
for each component $W_I$ of $\Obj_\bM$ the groupoid completion (if it exists)
has the same morphisms from $W_I$ to $W_I$ but (unless  $\bM$ is already a groupoid) will have more morphisms between the different components of $\Obj_\bM$ that are obtained by adding inverses and composites.
Before giving the  general construction for $\bG$, we consider the following simple example.

\begin{example}\label{ex:grouppt}\rm  Consider an atlas on the orbifold $Y$ consisting of a single point with stabilizer group $S$ 
with basic  charts  labelled by $\{1,\dots,N\}$ so that  $\Ii_Y$ is
the set of {\it all} subsets of $\{1,\dots,N\}$.  
Each group $\Ga_i$ acts transitively on $W_i$, so that we can identify $W_i\cong  \qu{\Ga_i}{S_i}$, where $S_i$ is the stabilizer of some point $x_i\in W_i$ and $\Ga_i$ acts on the quotient $W_i$ by multiplication on the left $\ga\cdot aS_i = \ga aS_i$.
%, so that $x_i$ corresponds to the identity coset $S_i$.  
Similarly  for each $I$, the group $\Ga_I = \prod_{i\in I} \Ga_i$ acts transitively on $W_I$
and we can identify $W_I: =  \qu{\Ga_I}{S_I}$   where $S_I = \Stab_{\Ga_I}(x_I)$.  
The 
 equivariant covering map $(\rho_{iI},\rho^\Ga_{iI}) $ identifies the stabilizer of the point $x_I\in W_I$ with the stabilizer 
 of its  image 
 $\rho_{iI}(x_I) \in W_i$.  Therefore the subgroups $S_I\subset \Ga_I$ can be canonically  identified provided that we can choose a 
 family of base points $x_I\in W_I$ that are consistent in the sense that $\rho_{IJ}(x_J) = x_I$ for all $I\subset J$.
 This is possible because
$\Ii_Y$ has a maximal element  $I_{\max} = \{1,\dots, N\}$.    Thus, we can fix $x_{\max}\in W_{I_{\max}}$ and then define $x_I: = \rho_{I (I_{\max})}(x_{\max})$ for all $I\in \Ii_Y$ so that $\rho_{IJ}(x_J)= x_I$ for all $I\subset J$. This gives
consistent identifications  of $S: = S_{I_{\max}}$ with $S_I: = \Stab_{\Ga_I}(x_I)$ for all $I$. In particular,
we  identify $S$ with the subgroup $ \Stab_{\Ga_i}(x_i)\subset \Ga_I$ for all $i$ so that we may write
$W_I: =   \qu{\Ga_I}{S} $,  where 
$S$ acts diagonally on $\Ga_I$ by $(\ga_{i_1},\dots,\ga_{i_k})\mapsto (\ga_{i_1}s,\dots,\ga_{i_k}s)$.
Thus the category $\bB_S$ corresponding to this atlas has the following description:
$$
\Obj_{\bB_S} = \bigsqcup_{I\subset \Ii_Y}W_I=\qu{\Ga_I}{S}\,,\quad \Mor_{\bB_S} = \bigsqcup_{I\subset J, I,J\in \Ii_Y}\Mor(W_I,W_J) =\qu{\Ga_J}{S} \times \Ga_I,
$$
where,
with $\ga_I: = (\ga_I^i)_{i\in I}\in \Ga_I$, $\ga_IS: = \{(\ga_I^is)_{i\in I}: s\in S\}  \in \qu{\Ga_I}{S}$, 
and abbreviating the projections $\rho_{IJ} $ by restrictions denoted for example as $\ga_J|_I$,
we have
% the equivalence relation  on $\Ga_J \times \Ga_I$ is $ (\ga_J,\ga_I)\sim_S (\ga_Js, s\Ga_I s^{-1})$
\begin{align*}
& s\times t: \qu{\Ga_J}{S} \times \Ga_I\to \qu{\Ga_I}{S}\times \qu{\Ga_J}{S},\\
& \qquad  \qquad  \qquad 
 (\ga_JS,\de_I)\mapsto \bigl(\de_I^{-1}\rho_{IJ}(\ga_J)S, \ga_JS\bigr) = :  \bigl(\de_I^{-1}\ga_J|_IS, \ga_JS\bigr),
 \end{align*}
 %\;\;\mbox{ and }\\
 and, when $I\subset J\subset K$, 
 \begin{align*}
& 
m_S\bigl((\ga_JS, \de_I),(\ga_KS, \de_J)\bigr) = 
\bigl(\ga_KS, \de_J|_I\,\de_I\bigr)\;\;\;\mbox{ if }\; \ga_J S = \de_J^{-1}\ga_K|_J S.
\end{align*}

  As preparation for the general case, let us check that  $\bB_S$ has a groupoid completion $\bG_S$.
If $S = \id$ then this is straightforward.   The category  $\bB_{\id}$ has objects $\bigsqcup_{I\in \Ii_Y} \Ga_I$ 
and at most one morphism  between any two points. Because  the groupoid  completion  of $\bB_{\id}$
(if it exists) has the same stabilizer subgroups as $\bB_{\id}$, the category  $\bG_{\id}$ must have 
 a single morphism between any pair of points with the same image in $Y$, and hence between each pair of objects. 
 But it is easy to construct such a groupoid.  We take
$$
\Mor_{\bG_{\id}} = {\textstyle  \bigsqcup_{I,J\in \Ii_Y}} \Ga_I\times \Ga_J, \quad s\times t(\ga_I,\ga_J) = (\ga_I,\ga_J)\in 
\Obj_{\bG_{\id}}\times \Obj_{\bG_{\id}},
$$
%where $\io$ denotes the obvious inclusion and
with  composition given by
\begin{equation}\label{eq:mult}
m_\id: (\Ga_I\times \Ga_J)\times_{\Ga_J} (\Ga_J\times \Ga_K)  \to \Ga_I\times \Ga_K,  \quad \bigl((x,y),(y,z)\bigr)\mapsto (x,z).
\end{equation}

More generally, the group $S$  acts on $\bG_{\id}$ by multiplication on the right; i.e.  each $s\in S$  gives a functor 
$F_s: \bG_{\id}\to \bG_{\id}$ that acts on objects  by $\ga\mapsto \ga s$ inducing isomorphisms
$\Mor(x,y)\to \Mor(xs,ys)$.
Since $F_s\circ F_t = F_{ts}$ for $s,t\in S$ there is a well defined quotient category $\qu{\bG_{\id}}{S}$ with objects $\bigsqcup_I \qu{\Ga_I}{S}$ and morphisms 
$\bigsqcup_{I,J} \qu{\Ga_I\times \Ga_J}{S}$.
We claim that this quotient category $\qu{\bG_{\id}}{S}$ can be identified with the groupoid completion $\bG_S$ of 
 $\bB_S$.  
 
 To prove this, consider the
 functor
$F_S: \bB_{\id}\to \bB_S$ 
 given on objects by the quotient maps $\Ga_I\mapsto \qu{\Ga_I}{S}=:W_I$,
and on morphisms (which are only defined when $I\subset J$) by
\begin{align}\label{eq:Fid}
& F_S:\Mor_{\bB_{\id}}(\Ga_I,\Ga_J)\to \Mor_{\bB_{S}}(W_I,W_J),\quad  \Ga_I\times \Ga_J \to \qu{\Ga_J}{S}\times \Ga_{I}\\\notag
& \qquad \qquad\quad 
(\ga_I,\ga_J) \mapsto \bigl(\ga_JS,(\ga_J|_I\, (\ga_I)^{-1})\bigr).
\end{align}
%Note that $F_S\circ F_s = F_S$ for all $s\in S$
%D This is well defined 
%because $\ga_J^is(\ga_I^is)^{-1}=\ga_J^i(\ga_I^i)^{-1}\in \Ga_I$ when $i\in I, s\in S$.  
Then $F_S$ commutes with the target map, and commutes  with the source map because
$F_S\circ  s(\ga_I,\ga_J) = \ga_IS$ while
\begin{align*}
s\circ F_S(\ga_I,\ga_J) & = s\bigl(\ga_JS, \ga_J|_I\ (\ga_I)^{-1}\bigr) \\
& = \bigl(\ga_J|_I(\ga_I)^{-1}\bigr)^{-1} \ga_J|_IS = 
\ga_I S.  
\end{align*}
Further, $m_S\circ (F_S\times F_S) = F_S\circ m_\id$ because when $I\subset J\subset K$
\begin{align*}
m_S\circ (F_S\times F_S)\bigl((\ga_I,\ga_J),(\ga_J,\ga_K)\bigr) &= m_S\bigr((\ga_JS, \ga_J|_I\,\ga_I^{-1}), (\ga_KS, \ga_K|_J\, \ga_J^{-1})\bigr)\\
 &=(\ga_KS, \ga_K|_I\,\ga_I^{-1}) = F_S(\ga_I,\ga_K).
\end{align*}
Finally notice that $F_S\circ F_s = F_S$ for all $s\in S$
because $\ga_J^is(\ga_I^is)^{-1}=\ga_J^i(\ga_I^i)^{-1}\in \Ga_I$ when $i\in I, s\in S$.  Therefore $F_S$ descends to the quotient 
$\qu{\bB_{\id}}{S}$ (considered as a submonoid of $\qu{\bG_{\id}}{S}$), inducing an isomorphism 
%D Since $F_S$ induces an isomorphism 
from this quotient $\qu{\bB_{\id}}{S}$ %D (considered as a submonoid of $\qu{\bG_{\id}}{S}$) 
to $\bB_S$.  We therefore  obtain from
its inverse an inclusion $\bB_S\to \qu{\bG_{\id}}{S}$  that exhibits $\qu{\bG_{\id}}{S}$ as the groupoid completion of $\bB_S$.
$\hfill\er$ \end{example}

\begin{prop}\label{prop:groupcomp}  Let $\Kk = \bigl(W_I,\Ga_I, \rho_{IJ}\bigr)_{I\subset J, I,J\in \Ii_\Kk}$ be an orbifold atlas as above. Then the category $\bB_\Kk$ 
has a canonical completion to an ep groupoid $\bG_\Kk$ with the same objects and realization as $\bB_\Kk$ and morphisms
$$
\Mor_{\bG_\Kk} = \bigsqcup_{I,J\in \Ii_Y: I\cup J\in \Ii_Y} \Mor_{\bG_\Kk}(W_I, W_J), \quad \Mor_{\bG_\Kk}(W_I, W_J): = W_{I\cup J}\times \Ga_{I\cap J},
$$
where $\Ga_\emptyset: = \id$, and with the following structural maps.
\begin{itemize}\item[\rm (i)]  The source and target maps $s\times t:  W_{I\cup J}\times \Ga_{I\cap J}\to W_I\times W_J$ are
$$
(s\times t) \bigl(z,\ga \bigr) \;=\;  \Bigl( \bigl(I, \ga^{-1}\rho_{I(I\cup J)}(z)\bigr) \,,\, \bigl( J, \rho_{J(I\cup J)}(z) \bigr) \Bigr).
$$
\item[\rm (ii)] Composition is given by
\begin{align*}
&\ m:  \Mor_{\bG_\Kk}(W_I, W_J) \, _t\times_s \Mor_{\bG_\Kk}(W_J, W_K)\to \Mor_{\bG_\Kk}(W_I, W_K), \\
&\ \bigl((z,\ga), (w,\de)\bigr) \mapsto (v',\al \,\de_{IJK}\ga_{IJK})\in W_{I\cup K}\times \Ga_{I\cap K},\quad
v': = \rho_{I\cup K,I\cup J\cup K}(v),
\end{align*}
where  $\ga_{IJK}, \de_{IJK}$ are the images of $\ga\in \Ga_{I\cap J},\  \de\in \Ga_{J\cap K}$ under projection to $\Ga_{I\cap J\cap K}$
%$\ga_{IJK} = \rho^\Ga_{(I\cap J \cap K)(I\cap J)}(\ga)\ $, 
%  $\ \de_{IJK} = \rho^\Ga_{(I\cap J \cap K)(I\cap J)}(\de),$ 
%\begin{align*}
%&\ \ga=(\ga_{IJK},\ga_{IJ\less K})\in \Ga_{I\cap J\cap K}\times \Ga_{(I\cap J)\less K} = \Ga_{I\cap J},\\ 
%&\ \de=(\de_{IJK},\de_{JK\less I})\in \Ga_{I\cap J\cap K}\times \Ga_{(J\cap K)\less I}= \Ga_{J\cap K}, 
%\end{align*}
and 
%\begin{itemize}\item[]
$(v,\al)\in W_{I\cup J\cup K}\times \Ga_{(I\cap K)\less J}$ is the unique pair  such that 
\vspace{-.05in}
$$
\rho_{I\cup J, I\cup J\cup K}(v) = \ga_{IJ\less K}^{-1}\al\,\de z, \;\; \rho_{J\cup K, I\cup J\cup K}(v) =  \ga_{IJ\less K}^{-1} w,
$$
where $\ga_{IJ\less K}: = \ga \ga_{IJK}^{-1}\in \Ga_{(I\cap J)\less K}$.
%\end{itemize}
\vspace{.05in}
\item[\rm (iii)] The inverse is given by
$$
\io: \Mor_{\bG_\Kk}(W_I, W_J) \to \Mor_{\bG_\Kk}(W_J, W_I), \quad  \bigl(z,\ga \bigr)\mapsto (\ga^{-1} z, \ga^{-1}).
$$
\end{itemize}
\end{prop}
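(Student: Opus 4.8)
The plan is to verify directly that the data described in (i)--(iii) do assemble into an ep groupoid, and then to check the universal property of Definition~\ref{def:groupc}. The key organizing idea is that the construction is ``locally modelled'' on Example~\ref{ex:grouppt}: over each point the stabilizers are identified via the group coverings $(\rho_{IJ},\rho_{IJ}^\Ga)$, and the morphisms $W_{I\cup J}\times\Ga_{I\cap J}$ are exactly what one gets by taking the stacky fiber product of $W_I$ and $W_J$ over $Y$. Concretely, I would first observe that for a point $z\in W_{I\cup J}$ and $\ga\in\Ga_{I\cap J}$ the pair $(z,\ga)$ records a morphism between $\ga^{-1}\rho_{I(I\cup J)}(z)\in W_I$ and $\rho_{J(I\cup J)}(z)\in W_J$, and that when $I\subset J$ this reduces to the morphism space $W_J\times\Ga_I$ of $\bB_\Kk$ in~\eqref{eq:Bcomp}; so $\bB_\Kk$ injects into the putative $\bG_\Kk$ by the evident inclusion $I\subset J\Rightarrow I\cup J=J,\ I\cap J=I$.

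First I would check that composition is well defined: given $((z,\ga),(w,\de))$ in the fiber product, one must show the pair $(v,\al)\in W_{I\cup J\cup K}\times\Ga_{(I\cap K)\less J}$ solving the two $\rho$-equations exists and is unique. Existence/uniqueness follows because $\rho_{I\cup J,\,I\cup J\cup K}$ and $\rho_{J\cup K,\,I\cup J\cup K}$ are the quotient maps by the free actions of $\Ga_{(I\cup J\cup K)\less(I\cup J)}=\Ga_{K\less(I\cup J)}$ and $\Ga_{(I\cup J\cup K)\less(J\cup K)}=\Ga_{I\less(J\cup K)}$ respectively; the compatibility condition $\de^{-1}\rho_{JK}\!$-type matching over $Y$ (encoded in the fiber-product condition $s(w,\de)=t(z,\ga)$ in $W_J$) forces the images in $W_{J\cup K}$ and $W_{I\cup J}$ to come from a single point of $W_{I\cup J\cup K}$ up to the residual free group actions, and those two residual groups together with the choice of lift pin down $(v,\al)$ uniquely. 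This is the step I expect to be the main obstacle: one has to set up the bookkeeping of the subgroups $\Ga_{(I\cap J)\less K},\Ga_{(I\cap K)\less J},\Ga_{(J\cap K)\less I},\Ga_{I\cap J\cap K}$ and their inclusions into $\Ga_{I\cap J\cup\cdots}$ carefully, using $\Ga_{I\cap J}=\Ga_{(I\cap J)\less K}\times\Ga_{I\cap J\cap K}$ and similarly for the others, so that the formula $\al\,\de_{IJK}\ga_{IJK}\in\Ga_{I\cap K}$ makes sense (note $(I\cap K)=((I\cap K)\less J)\times(I\cap J\cap K)$ as index sets, so $\al$ and $\de_{IJK}\ga_{IJK}$ live in complementary factors) and so that the defining equations for $(v,\al)$ are consistent.

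Next I would verify associativity. Rather than a brute-force three-fold computation, I would reduce to Example~\ref{ex:grouppt}: fix a point $y\in Y$ and restrict everything to the fibers over $y$; there each $W_I$ is a disjoint union of copies of $\Ga_I/S_I$ and the composition formula becomes the explicit monoid/group law $m_S$ of that example, whose associativity (and that of its groupoid completion $\bG_S$) was checked there. Since smoothness and the global formulas are determined pointwise by these fiberwise identifications (the $\rho_{IJ}$ being local diffeomorphisms), associativity of $m$, the identity axioms ($(z,\ga)$ composed with the appropriate identity, using $\rho_{II}=\id$ and part~(i)), and the inverse axioms ($\io$ as in (iii), checking $s\circ\io=t$, $t\circ\io=s$, and $m((z,\ga),\io(z,\ga))=\mathrm{id}$) all follow. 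One then notes every morphism is invertible, so $\bG_\Kk$ is a groupoid; it is \'etale because all structural maps are built from the $\rho_{IJ}$ (local diffeomorphisms) and group multiplications, and proper because the footprint maps $\psi_I$ induce a homeomorphism $|\bG_\Kk|\cong Y$ with $Y$ Hausdorff, exactly as for $\bB_\Kk$.

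Finally, for uniqueness/canonicity I would invoke Definition~\ref{def:groupc}: the inclusion $\io\colon\bB_\Kk\to\bG_\Kk$ is a bijection on objects, an isomorphism on stabilizers (each $\Mor_{\bG_\Kk}(W_I,W_I)=W_I\times\Ga_I$ agrees with that of $\bB_\Kk$, and the group coverings give $\Ga_I^x\cong\Ga_J^y$ compatibly), and induces a homeomorphism $|\bB_\Kk|\to|\bG_\Kk|$ since both realize $Y$; and any groupoid completion must contain the images of all composites of $\bB_\Kk$-morphisms and their inverses, which forces it to have at least the morphism set $\bigsqcup_{I\cup J\in\Ii_Y}W_{I\cup J}\times\Ga_{I\cap J}$ — while the stabilizer-isomorphism requirement forbids it from having more. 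Hence $\bG_\Kk$ is the unique groupoid completion, completing the proof.
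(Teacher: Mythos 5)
Your proposal is correct and follows essentially the same route as the paper: well-definedness of composition is established via the fiber-product property of $W_{I\cup J\cup K}$ over $W_{J\cup(I\cap K)}$ together with the residual free action of $\Ga_{(I\cap K)\less J}$, and associativity is reduced to the fiberwise subcategory over each point $y\in Y$, which is exactly the situation of Example~\ref{ex:grouppt}. The differences are only in the level of detail (the paper carries out the source/target verification for the composite and the explicit identification of the fiber category with $\qu{\bG_{\id}}{S}$), not in method.
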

\begin{proof}  When $I\subset J\subset K$ the above  formulas  for $\Mor_{\bG_\Kk}(W_I,W_J)$ and the composition  in (ii) agree with the previous definitions
for $\bB_\Kk$.     We must extend this definition to all pairs 
 $I,J$ with $F_I\cap F_J\ne \emptyset$ (or equivalently $I\cup J\in \Ii_Y$) 
 so as to be consistent with the footprint maps and the local group actions.  In particular, in order to see that the inclusion $\bB_\Kk\to \bG_\Kk$ induces a homeomorphism $|\bB_\Kk|\stackrel{\cong}\to |\bG_\Kk|$   we require
\begin{itemize}\item[(*)] $\exists$  a morphism from $x\in W_I$ to $y\in W_J$ in $\bG$  iff $(I,x)\sim (J,y)$ in $\Obj_{\bB_\Kk}$
 iff $\psi_I(x) = \psi_J(y)\in F_{I\cup J}: = F_I\cap F_J$;
%\item[(b)]
%the group $\Mor(x,x)$ should be isomorphic to $\Stab_I(x): = \{\ga\in \Ga_I | \ga x = x\}$ (or equivalently  to $
%\Stab_J(y)$).  
\end{itemize}
To see that the morphisms as described above  satisfy (*)
note first that $\Mor_\bG(x,y)\ne \emptyset$ implies that $x,y$ have the same image in $Y$.  
Conversely,
suppose  given 
 $x\in W_I,y\in W_J$ with  $I\not\subset J$ and such that $\psi_I(x) = \psi_J(y)$.
% 
%  and choose  $z\in W_{I\cup J}$ so that $\psi_{I\cup J}(z) = \psi_I(x)=\psi_J(y)$.
Since $\rho_{J(I\cup J)}: W_{I\cup J}\to W_{J(I\cup J)}: = \psi^{-1}(F_{I\cup J})$ is surjective 
and factors out by the free action of 
$\Ga_{I\less J}$
we may choose $z \in W_{I\cup J}$ so that $\rho_{J(I\cup J)}(z) = y$.  Then  
$\rho_{I(I\cup J)}(z)$ lies in the $\Ga_{I}$-orbit of $x$ because $\psi_{I\cup J}(z) = \psi_I(x)$, so that
by replacing $z$ by $\de z$ for some 
$\de\in \Ga_{I\less J}$ we may arrange  that $\rho_{I(I\cup J)}(z)$ lies in the $\Ga_{I\cap J}$-orbit of $x$,
where $\Ga_{I\cap J}: = \id$ if $I\cap J = \emptyset$.
Therefore there is a pair $(z,\ga)\in (W_{I\cup J}, \Ga_{I\cap J})$ with 
$\rho_{I(I\cup J)}(z) = \ga x, \rho_{J(I\cup J)}(z) =y$.  Thus, if we define $\Mor(W_I,W_J): = 
W_{I\cup J}\times  \Ga_{I\cap J}$ with source and target maps as in (i), condition (*) is satisfied.

The next step is to check that composition as given by (ii) is well defined.  
To this end, observe that for any triple $I,J,K$ with $I\cup J\cup K\in \Ii_Y$, 
the square in the commutative diagram
\begin{equation}\label{WWsq}
\xymatrix
{
W_{I\cup J\cup K}\quad  \ar@{->}[d]_{\rho_{I\cup J \bullet}} \ar@{->}[r]^{\rho_{J\cup K \bullet}} &\quad W_{J\cup K} \ar@{->}[d]_{\rho_{J\cup(I\cap K)\bullet}}  \\
W_{I\cup J}\quad  \ar@{->}[r]^{\rho_{J\cup(I\cap K)\bullet}} & \quad W_{J\cup(I\cap K)}\ar@{->}[r]^{\quad\rho_{J\bullet}} & W_J,
}
\end{equation}
expresses $W_{I\cup J\cup K} $ as a fiber product over $W_{J\cup(I\cap K)} = W_{(I\cup J)\cap (J\cup K)}$,
where to simplify notation we write $\rho_{I\bullet}: = \rho_{IJ}:W_J\to W_I$. 
(This holds because the projection $\rho_{I\bullet}$ is a principal $\Ga_{\bullet\less I}$-bundle.)
Hence any pair in  $W_{I\cup J}\times W_{J\cup K}$ has a unique lift to 
$W_{I\cup J\cup K}$ provided that its components have the same image in $W_{J\cup(I\cap K)}$.

Now consider the composite $(w,\de)\circ (z,\ga): W_I\to W_K$.
The identity $s(w,\de) = t(z,\ga)$ implies that
$\rho_{J(I\cup J)} (z) = \de^{-1}\rho_{J(J\cup K)} (w)$, so that the elements
$\ga_{IJ\less K}^{-1}\de\, z\in W_{I\cup J}$ and 
$\ga_{IJ\less K}^{-1}\, w\in W_{J\cup K}$ have the same image in $W_J$.  Hence, because 
$\rho_{J\bullet}: W_{J\cup(I\cap K)}\to W_J$ quotients out by a free action of $\Ga_{(I\cap K)\less J}$,
the properties of the above diagram imply 
 there is a unique $\al\in \Ga_{(I\cap K)\less J}$ such that
$$
\rho_{J(I\cup J)}(\al \ga_{IJ\less K}^{-1}\de\, z) = \rho_{J(J\cup K)}(\ga_{IJ\less K}^{-1}\, w),\;\;\mbox{ where } 
\ga_{IJ\less K}: = \rho^\Ga_{(I\cap J\less K)(I\cap J)}(\ga).
$$
(Notice here that $\al\in \Ga_K$ commutes with 
$\ga_{IJ\less K}\in \Ga_{(I\cap J)\less K}$ though it may not commute with $\de$.)
Thus $v\in W_{I\cup J\cup K}$ is uniquely defined by the given conditions.

  It remains to check that the morphism $\bigl(v',\al(\de\ga)_{IJK}\bigr)\in W_{(I\cup K)(I\cap K)}$ 
  has source $s(z,\ga)$ and target $t(w,\de)$.  
But
\begin{align*}
t(v',\al(\de\ga)_{IJK}) & = \rho_{K(I\cup K)}(v') =  \rho_{K(I\cup J\cup K)}(v) \\
& =   \rho_{K(J\cup K)}(\ga^{-1}_{I\cup J\less K}w) = \rho_{K(J\cup K)}(w) = t(w,\de),
\end{align*}
because $\ga_{IJ\less K}\in \Ga_{J\less K}$ has no component in $\Ga_K$. 
Similarly,
\begin{align*}
s(v',\al(\de\ga)_{IJK}) & = \rho_{I(I\cup K)}((\al(\de\ga)_{IJK})^{-1} v') \\
& =  (\al(\de\ga)_{IJK})^{-1} \rho_{I(I\cup J\cup K)}(v) \\
& =(\de\ga)_{IJK})^{-1}\al^{-1}\rho_{I(I\cup J)} ( \al \ga^{-1}_{IJ\less K} \de z)\\
& = (\de\ga)_{IJK})^{-1}\al^{-1} \al \ga^{-1}_{IJ\less K} \de_{IJK} \rho_{I(I\cup J)} (z)\\
& =  \ga_{IJK}^{-1} \,\ga^{-1}_{IJ\less K}\, \de_{IJK}^{-1}\, \de_{IJK}\, \rho_{I(I\cup J)} (z) = \ga^{-1}\rho_{I(I\cup J)} (z) = s(z,\ga),
\end{align*}
where we have used the fact that  $\rho_{I(I\cup J)} (\de z) = \de_{IJK}\rho_{I(I\cup J)} (z)$
because $\de\in \Ga_{J\cap K}$ so that $\de \de_{IJK}^{-1}\in \ker \rho^\Ga_{I(I\cup J)}$,
 and the fact that
  $ \ga_{IJ\less K}\in \Ga_{I\less K} $ and $\de_{IJK}\in \Ga_K$ commute.

  Thus the formula for $m$ in (ii) is well defined and compatible with source and target maps.  Moreover, $m$ is 
  a local diffeomorphism. It is also easy to check that $m$ is compatible with
  the  formula for the inverse given in (iii). Indeed, if $(z,\ga)\in \Mor(W_I,W_J)$
  it is immediate that %D $\io\circ \io = \id$ and that 
  $s\times t(z,\ga) = 
 t\times s(\ga^{-1} z,\ga^{-1})=  t\times s\bigl(\io(z,\ga)\bigr)$.  Further, 
$$
m\bigl((z,\ga),(\ga^{-1} z,\ga^{-1})\bigr) = (v', \de'\ga') %D = \bigl(s(z,\ga), \id\bigr) \in \Mor(W_I, W_I),
$$ 
where  $\ga' = \ga^{-1}$, $\de' = \de = \ga$, and $v' = \rho_{I(I\cup J)}(v)$ with  $v\in W_{I\cup J}$  defined by the requirement
that
$\rho_{(I\cup J)(I\cup J)}(v) = \ga^{-1} z$ so that $v= \ga^{-1} z$  and $v' = s(z,\ga)$.  Thus 
the composite $m\bigl((z,\ga),(\ga^{-1} z,\ga^{-1})\bigr) =  \bigl(s(z,\ga), \id\bigr) $ is the identity morphism at $s(z,\ga)$.  
  
To prove that $m$ is associative,\footnote
{
An alternative argument, valid in the case where the group actions are effective, is given in \cite{Mcn}.}
it suffices to show that,  for each $y\in Y$, $m$ restricts to an associative multiplication  on the 
full subcategory $\bG_y$ of $\bG_\Kk$ with objects $\bigsqcup_{I\in \Ii_y} \psi_I^{-1}(y) $, where 
$\Ii_y: = \{I\in \Ii_Y\ | \ y\in F_I\}$.
Since  $\Ga_I$ acts transitively on $\psi_I^{-1}(y)$ for each $I$, this is  precisely the case considered in Example~\ref{ex:grouppt}. 
Hence it will suffice to show that the above composition operation is the image of composition in the category $\bG_{id}$.
To this end,  choose a compatible set of base points $\bigl(x_{\bullet} = x_I\in \psi_I^{-1}(y)\bigr)_I\in \Ii_y $ and define
$$
H_I:  \Ga_I \to  \psi_I^{-1}(y),\quad \ga_I \mapsto \ga_I(x_\bullet),
$$
$$
H_{IJ}: \Mor_{\bG_{\id}}(\Ga_I,\Ga_J) \to \Mor_{\bG_y}( \psi_I^{-1}(y), \psi_J^{-1}(y)),\quad
(\ga_I,\ga_J)\mapsto (\ga'_{I\cup J}x_{\bullet}, \de'_{I\cap J})
$$
where $\de'_{I\cap J} (\ga_I|_{I\cap J}) = \ga_J|_{I\cap J}$ and $\ga'_{I\cup J}\in \Ga_{I\cup J}$ is the unique element whose projection
$\ga'_{I\cup J}|_I$ to $\Ga_I$ is $\de_{I\cap J} \ga_I$ and whose projection
$\ga'_{I\cup J}|_J$ to $\Ga_J$  is $\ga_J$.  The restriction to $\bG_y$ of the diagram \eqref{WWsq} that is used to define the composite $m$ takes the form
$$
\xymatrix
{
\Ga_{I\cup J\cup K}(x_{\bullet})\quad  \ar@{->}[d]_{\Ga_{K\less (I\cup J)}} \ar@{->}[r]^{\Ga_{I\less (J\cup K)}} &\quad \Ga_{J\cup K} (x_{\bullet})\ar@{->}[d]_{\Ga_{K\less (I\cap K)}}  \\
\Ga_{I\cup J}(x_{\bullet})\quad  \ar@{->}[r]^{\Ga_{I\less (I\cap K)}} & \quad \Ga_{J\cup(I\cap K)}(x_{\bullet})\ar@{->}
[r]^{\quad\Ga_{(I\cap K)\less J}} & \Ga_J(x_{\bullet}),
}
$$
where each arrow is labelled by the group that acts freely on its fibers.  But this is the image under $H_{\bullet}$ of a corresponding diagram for the groups $\Ga_{\bullet}$
%$$
%\xymatrix
%{
%\Ga_{I\cup J\cup K}\quad  \ar@{->}[d]_{\Ga_{K\less (I\cup J)}} \ar@{->}[r]^{\Ga_{I\less (J\cup K)}} &\quad \Ga_{J\cup K} \ar@{->}[d]_{\Ga_{K\less (I\cap K)}}  \\
%\Ga_{I\cup J}\quad  \ar@{->}[r]^{\Ga_{I\less (I\cap K)}} & \quad \Ga_{J\cup(I\cap K)}\ar@{->}
%[r]^{\quad\Ga_{(I\cap K)\less J}} & \Ga_J,
%}
%$$
that can be used in precisely the same way to define the composite in $\bG_{\id}$.  
This was previously written in the simple form $m_{\id}\bigl((\ga_I,\ga_J),(\ga_J,\ga_K)\bigr) = (\ga_I,\ga_K)$,
but, with $H(\ga_I,\ga_J) = (\ga'_{I\cup J},\de'_{I\cap J})$ and $H(\ga_J,\ga_K) = (\ga''_{J\cup K},\de''_{I\cap K})$
satisfying the identities
$$
\ga'_{I\cup J}|_I=\de'_{I\cap J} \ga_I,\;\; \ga'_{I\cup J}|_J=\ga_J,\;\;
\ga''_{J\cup K}|_J=\de''_{J\cap K} \ga_J,\;\; \ga''_{J\cup K}|_K=\ga_K,
$$
one can check that there is a unique element $\al\in \Ga_{I\cup J\cup K}$ with components given by
\begin{align*}
%& \al_k = (\ga_K)_k\;\; k\in K,  \\
%& \al_i = ({ \de'}_{I\cap J}^{-1} \ga_J)_i,  i\in I \mbox{ for some}\;\;  \de'_{I\cap J}\in \Ga_{I\cap J}\\
%& \al_j = ( {\de''}_{J\cap K}^{-1} \ga_K)_i,  i\in I \mbox{ for some}\;\; \de''_{J\cap K}\in \Ga_{J\cap K}
& \al|_I = ({ \de'_{I\cap J}}^{-1} \ga_J)|_I, \quad
\al|_J = ( {\de''_{J\cap K}}^{-1} \ga_K)|_J,  %i\in I \mbox{ for some}\;\; \de''_{J\cap K}\in \Ga_{J\cap K}
\quad 
\al|_K = \ga_K
\end{align*}
that plays the role of the element $v$ in the definition of $m$ in (ii).   Using this, it is straightforward to check that the multiplications correspond under  $H_{\bullet}.$  It follows that
 $m$ is associative, which completes the definition of the groupoid $\bG_\Kk$.
\end{proof}

\section{Existence of atlases}

We now show that every orbifold has an orbifold atlas that is unique up to the following notion of commensurability.

\begin{defn}  Let $\Kk$, $\Kk''$ be orbifold atlases on $Y$.
We say that  $\Kk$ is a {\bf  subatlas} of $\Kk''$ if there is an injective \'etale functor  $\io:\bB_{\Kk}\to \bB_{\Kk''}$ such that 
$|\psi| = |\psi''|\circ |\io|: | \bB_{\Kk}|\to Y$. 
Two orbifold atlases $\Kk, \Kk'$ on $Y$ are {\bf directly commensurate} if they are subatlases  of a common atlas $\Kk''$. 
They are {\bf commensurate} there is a sequence of atlases
$\Kk =:\Kk_{1},\dots,\Kk_{\ell}: = \Kk'$ such that any consecutive pair $\Kk_i, \Kk_{i+1}$ are directly commensurate. 
\end{defn}

\begin{rmk}\rm  (i)  It is not hard to see that commensurate atlases are cobordant, i.e. there is an orbifold atlas on the product $[0,1]\times Y$ 
that restricts to $\Kk$ on $\{0\}\times Y$ and to $ \Kk'$ on $\{1\}\times Y$.  (For precise definitions, and a proof see \cite[\S6.2]{MW2}.)  
If we assume that all atlases (or, equivalently, their groupoid completions) have compatible orientations, we may conclude that any pair of atlases that are 
oriented commensurate have the same fundamental class; cf. \cite{Mbr} and \S\ref{s:app} below.\MS

\NI  (ii)
 It is likely that  commensurate atlases are directly commensurate, i.e. that the relation of direct commensurability is transitive; however we have not attempted to prove this since we are most interested in the cobordism relation.
\hfill$\er$
\end{rmk}

Here is our main result.

\begin{prop}\label{prop:orb}  Every paracompact orbifold $Y$ has an orbifold atlas $\Kk$  whose associated groupoid 
$\bG_{\Kk}$ is an orbifold structure on $Y$.   Moreover, there is a bijective correspondence between commensurability classes of such  atlases and Morita equivalence classes of ep groupoids.
\end{prop}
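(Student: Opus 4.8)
The plan is to establish the two halves of the statement separately: first the existence of a strict orbifold atlas whose groupoid completion represents $Y$, and then the bijection between commensurability classes of atlases and Morita equivalence classes of ep groupoids.

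For existence, I would start from any representing groupoid $(\bG,f)$ for $Y$ and invoke the Robbin--Salamon covering family \cite[Lemma~2.10]{RobS}, which produces subsets $U_i\subset\Obj_\bG$ together with finite groups $\Ga_i$ such that the full subcategory of $\bG$ on $U_i$ is isomorphic to $\bG_{(U_i,\Ga_i)}$. After passing to a locally finite refinement (using paracompactness) and shrinking the $U_i$ slightly so that the footprints $F_i := f(U_i)$ still cover $Y$, I take $\Ii_Y$ to be the set of finite $I\subset A$ with $F_I=\bigcap_{i\in I}F_i\ne\emptyset$. The key construction, as flagged in the discussion after Example~\ref{ex:foot} and attributed to Pardon \cite[\S2.1.2]{Pard}, is the domain $W_I$ of the transition chart: for $|I|>1$ one sets $W_I$ to be (a suitable component/open subset of) the space of morphisms in $\bG$ with source in some fixed $W_{i_0}$ ($i_0\in I$) and target in the other $W_j$'s simultaneously — i.e. the iterated fiber product of the morphism spaces $\Mor_\bG(U_{i_0},U_j)$ over $\Obj_\bG$. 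Because $\bG$ is étale and proper this fiber product is a manifold, the product group $\Ga_I=\prod_{i\in I}\Ga_i$ acts on it, the subgroup $\Ga_{I\less\{i\}}$ acts freely (no morphism in $\bG$ fixes both a source and a target unless it is trivial, combined with the freeness already present at the non-basic level), and the forgetful maps give the covering maps $\rho_{IJ}:W_J\to W_{IJ}\subset W_I$ satisfying conditions (a)--(c) of Definition~\ref{def:orbat}. Then $\bB_\Kk$ is an ep category by the Lemma, and by Proposition~\ref{prop:groupcomp} it has a groupoid completion $\bG_\Kk$ with $|\bG_\Kk|\cong Y$; one checks the footprint maps $\psi_I$ endow $\bG_\Kk$ with the correct smooth structure over each $F_I$, so $(\bG_\Kk,|\psi|^{-1}\circ\pi)$ is an orbifold structure on $Y$ in the sense of Definition~\ref{def:orbstr}, and it is easily verified to be Morita equivalent to the original $(\bG,f)$ via the functor $\io:\bB_\Kk\hookrightarrow\bG_\Kk$ composed with the natural functor $\bB_\Kk\to\bG$ built from the inclusions $W_I\hookrightarrow\Obj_\bG$, $W_I\times\Ga_I\hookrightarrow\Mor_\bG$.

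For the bijective correspondence, the map in one direction sends a commensurability class $[\Kk]$ to the Morita equivalence class $[\bG_\Kk]$; the previous paragraph shows it is well defined on atlases and surjective. To see it is well defined on commensurability classes, note that if $\Kk$ is a subatlas of $\Kk''$ via $\io:\bB_\Kk\to\bB_{\Kk''}$, then $\io$ extends to a functor $\bG_\Kk\to\bG_{\Kk''}$ of groupoid completions (by uniqueness of completions, Definition~\ref{def:groupc}, applied to the image) which is étale, full and faithful on stabilizers and compatible with the footprint maps, hence exhibits $\bG_\Kk$ and $\bG_{\Kk''}$ as Morita equivalent; transitivity of Morita equivalence then handles a general chain of direct commensurabilities. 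Injectivity is the substantive point: if $\bG_{\Kk}$ and $\bG_{\Kk'}$ are Morita equivalent, I must produce a chain of direct commensurabilities from $\Kk$ to $\Kk'$. The strategy is to use a common refinement $(\bG'',f'')$ of $\bG_\Kk$ and $\bG_{\Kk'}$, apply the existence construction above to $\bG''$ to obtain an atlas $\Kk''$, and then show $\Kk''$ is directly commensurate with both $\Kk$ and $\Kk'$ — the point being that the refinement functors $\bG''\to\bG_\Kk$ allow one to realize the charts $W_I$ of $\Kk$ inside $\Obj_{\bG''}$ (after a common refinement of the underlying covers, which exists by local finiteness and paracompactness), so that $\bB_\Kk$ and $\bB_{\Kk''}$ both inject into the $\bB$-category of a still finer atlas.

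The main obstacle I anticipate is the injectivity step — specifically, arranging that two atlases whose completions are abstractly Morita equivalent can be fit as subatlases of a single common atlas $\Kk''$. The difficulty is bookkeeping: given a refinement functor $F:\bG''\to\bG_\Kk$, one needs to index the charts of $\Kk''$ compatibly with those of $\Kk$ (refining the cover $(F_i)$ by the cover coming from $\bG''$ and keeping track of which intersections are nonempty), and then verify that the covering maps $\rho_{IJ}$ of $\Kk''$ restrict correctly to reproduce the $\rho_{IJ}$ of $\Kk$ on the relevant subcharts, so that the inclusion $\bB_\Kk\hookrightarrow\bB_{\Kk''}$ really is an étale functor compatible with all footprint maps. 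All of this is routine in spirit but requires care with index sets; once it is in place, well-definedness, surjectivity, and injectivity assemble into the claimed bijection.
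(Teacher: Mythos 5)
Your proposal follows essentially the same route as the paper: the transition charts are built as (iterated fiber products of) morphism spaces of a representing groupoid, with $\Ga_I$ acting by pre- and post-composition and the covering maps $\rho_{IJ}$ given by composing and forgetting, and the commensurability/Morita correspondence is handled exactly as you describe, by pushing an atlas forward from a common refinement $\bG''$. The only cosmetic difference is that you parametrize $W_I$ by a ``star'' of morphisms out of $\si(W_{i_0})$ where the paper uses composable chains $\al_{i_\ell}\in\Mor_\bG\bigl(\si(W_{i_{\ell-1}}),\si(W_{i_\ell})\bigr)$; these are $\Ga_I$-equivariantly diffeomorphic, so the argument is the same.
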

\begin{proof} Let $\bG$ be an ep groupoid with footprint map $f: \Obj_\bG\to Y$.
Our first aim is to construct an atlas $\Kk$ on $Y$ 
together with a functor $F: \bB_\Kk\to \bG$ that covers the identity map on  $Y$ and hence extends to
an equivalence from the groupoid completion
 $\bG_\Kk$  to $\bG$.

 By Moerdijk~\cite{Moe}, each point in $Y$ is the image of a group quotient that embeds into $\bG$.
 Therefore since $Y$ is paracompact we can find 
 a locally finite set of basic charts $\bK_i: = 
\bigl(W_i,\Ga_i,\psi_i\bigr)_{i\in A}$ on $Y$ whose footprints 
$(F_i)_{i\in A}$ cover $Y$,
together with smooth maps
$$
\si:{\textstyle \bigsqcup_iW_i\hookrightarrow \Obj_{\bG},\quad  \Tsi:\bigsqcup_i W_i\times \Ga_i\hookrightarrow \Mor_\bG},
$$
where  $\si|_{W_i}$ is a diffeomorphism to its image,
  that are compatible in the sense that the following diagrams commute:
$$
\xymatrix
{
W_i\times \Ga_i  \ar@{->}[d]_{s\times t} \ar@{->}[r]^{\Tsi_i} &  \Mor_{\bG}\ar@{->}[d]_{s\times t}    \\
W_i\times W_i \ar@{->}[r]^{\si_i} & \Obj_{\bG}\times \Obj_{\bG},
} \qquad\qquad 
\xymatrix
{
W_i  \ar@{->}[d]_{\psi_i} \ar@{->}[r]^{\si_i} &\;\;\Obj_{\bG}  \ar@{->}[d]_{f}   \\
Y \ar@{->}[r]^{\id} & Y.
}
$$
We claim that there is an atlas $\Kk$ with these basic charts whose footprint maps $\psi_I$ extend 
%an inclusion $\io_\si:\bigsqcup_i W_i \to \Obj_{\bG}$ of the domains of the basic charts into $\Obj_{\bG}$
 $f\circ \si: \bigsqcup_i W_i \to Y$. This atlas depends on the choice of a total order on $A$.
To begin the construction, we define  $W_I$ where $|I| = 2$.  Since $A$ is ordered,
 any set $I\in \Ii_Y$ with $|I|=2$ may be written as
%D Given $I: = \{i_0, i_1\}$ with $F_I\ne \emptyset$, order its elements so that $i_0<i_1$ and
$I: = \{i_0, i_1\}$ with  $i_0<i_1$.  Consider
the set $$
W_I: = W_{\{i_0,i_1\}}: = \Mor_{\bG}(\si(W_{i_0}),\si(W_{i_1})): = (s_\bG\times t_\bG)^{-1}\bigl(\si(W_{i_0})\times \si(W_{i_1})\bigr)
$$
of morphisms in $\bG$ from $\si(W_{i_0})$ to $\si(W_{i_1})$, where  
 to avoid confusion the source and target maps in $\bG$ are denoted $s_\bG, t_\bG$.
Then $W_I$ is the inverse image of an open subset of $\Obj_{\bG}\times \Obj_{\bG}$, hence open in 
%This is a smooth manifold since it is an open subset of the manifold 
$ \Mor_{\bG}$, and thus a smooth manifold. 
 Since the points in $f^{-1}(F_I)\cap \si(W_{i_0})$ are identified with points in $f^{-1}(F_I)\cap \si(W_{i_1})$
 by morphisms in $\bG$,
 the restrictions of $s_\bG,t_\bG$ to $W_I$ have images
 $$
 %D \si\circ 
 s_\bG(W_I) = f^{-1}(F_I)\cap \si(W_{i_0}),\qquad  %D \si\circ
  t_\bG(W_I)= f^{-1}(F_I)\cap \si(W_{i_1}).
 $$
 Moreover, for any $x\in s_\bG(W_I)$ and $\al\in \Mor_{\bG}(x,y)\in W_I$, we have
 $$
 s_\bG^{-1}(x)\cap W_I \cong \Mor_\bG\bigl(t_\bG(\al),\si(W_{i_1})\bigr)\cong \Ga_{i_1},
 $$
 where the second isomorphism holds because  by assumption
$f\circ \si_{i_1}=\psi_{i_1}: W_{i_1}\mapsto F_{i_1}$ is the composite of the 
quotient map $W_{i_1}\to \qu{W_{i_1}}{\Ga_{i_1}}$ with a homeomorphism.
Rephrasing this  in terms of the action 
of the group $\Ga_I: = \Ga_{i_1}\times \Ga_{i_0}$  on $\al\in W_{I}$ by 
 $$
(\ga_{i_1},\ga_{i_0})\cdot \al = \Tsi(\ga_{i_1})\circ \al\circ \Tsi(\ga_{i_0}^{-1}),
$$ 
one finds that  $\Ga_{i_1}$ acts freely on $W_I$ and that
the  source map $s_\bG:  W_I\to  \si(W_{i_0})$
induces a diffeomorphism $\qu{W_I}{\Ga_{i_1}}\to  \si(W_{i_0})\cap f^{-1}(F_I)$.
Similarly, $\Ga_{i_0}$ acts freely, and the target map
$t_\bG:  W_I\to  \si(W_{i_1})$
induces a diffeomorphism $\qu{W_I}{\Ga_{i_0}}\to  \si(W_{i_1})\cap f^{-1}(F_I)$.  Since the footprint map
for the chart  $W_{i}$ factors out by the  action of $\Ga_i$, the same is true for this transition chart:
in other words the footprint map
$$
\psi_I: W_I\to Y, \quad  \al\mapsto f\bigl(s_\bG(\al)\bigr) = f\bigl( t_\bG(\al)\bigr)
$$ 
 induces a homeomorphism $\qu{W_I}{\Ga_I}
\stackrel{\cong}\to F_I.
$
 Therefore $W_I$ satisfies all the requirements of a sum 
of two charts.

  To define the transition  chart for general $I \in \Ii_Y$,  enumerate the elements  of $I$ as
$i_0<i_1<\cdots<i_k$, where $k+1: = |I|\ge 2$ and define
$W_I$ to be the set of composable $k$-tuples of morphisms $(\al_{i_k},\cdots,  \al_{i_1})$, where 
\begin{equation}\label{eq:WI}
%D (\al_{i_k},\cdots,  \al_{i_1}), \quad
 \al_{i_\ell} \in \Mor_{\bG}\bigl(\si(W_{i_{\ell-1}}),\si(W_{i_\ell})\bigr).
\end{equation}
If $H: =  (i_1,\cdots,i_{k})$, then $W_I$ is the fiber product $W_H\ _{s_\bG}\!\times_{t_\bG} W_{i_{1} i_{0}}$.
Since the maps $s_\bG: W_{i_{1} i_{0}}\to W_{i_{0}}$, $t_\bG: W_{i_{1} i_{0}}\to W_{i_{1}}$ are \'etale and so locally submersive, it follows  by induction on $|I|$ that $W_I$ is a smooth manifold.   Moreover, it
supports an action of $\Ga_I$ given by $$
%:
\ga\cdot (\al_{i_k},\cdots,  \al_{i_1}) = (\al_{i_k},\cdots,  \al_{i_{\ell+1}}\Tsi( \ga)^{-1}, \Tsi(\ga) \al_{i_{\ell}},\cdots,  \al_{i_1}), \quad \ga\in \Ga _{i_\ell}.
$$
For any $H\subsetneq I$ the subgroup $\Ga_{I\less H}$ acts freely, and the quotient  can be identified with $W_H$ by means of the appropriate partial compositions and forgetful maps. More precisely, if
 $I = (i_0,\cdots, i_k)\supset H = (i_{n_0},\cdots,i_{n_\ell})$  then
$$
\rho_{HI}(\al_{i_k},\cdots,  \al_{i_1}) = \left\{\begin{array}{ll}(\al_{i_{n_\ell}}\circ\cdots\circ \al_{i_{n_{\ell-1}+1}},\cdots,  \al_{i_{n_2}}\circ\cdots \circ\al_{i_{n_{1}+1}}),&\mbox{ if } \ell\ge 1\\
s_\bG(\al_{i_p+1} ) = t_\bG(\al_{i_p})&\mbox{ if } \ell=0, p: = n_0\end{array}
\right.
$$
For example if  $H = \{1,3,6\}\subset I= \{0,1,2,3,4,5,6,7\}$ then
$$
\rho_{HI}: (\al_7,\cdots,  \al_1) = (\al_6\circ \al_5\circ \al_4,  \al_3\circ \al_2), \quad
\rho_{\{3\}I} : (\al_7,\cdots,  \al_1) = s(\al_4) = t(\al_3).
$$
It is clear from this description that $\rho_{HJ} = \rho_{HI}\circ \rho_{IJ}$ whenever $H\subset I\subset J$.
Further the footprint map $\psi_I: W_I\to Y$ can be written as 
$$
\psi_I\bigl((\al_{i_k},\cdots,  \al_{i_1})\bigr) = f\bigl(\si\circ s_\bG(\al_{i_p})\bigr)= f\bigl(\si\circ t_\bG( \al_{i_p})\bigr), \quad \forall\ 1\le p\le k.
$$ 
This defines the atlas $\Kk$. 

We define the functor $F_\Kk: \bB_\Kk\to \bG$  on objects by
\begin{equation}\label{eq:FF}
W_I\to \Obj_{\bG}, \quad \left\{\begin{array}{ll}  x\mapsto \si(x), &\mbox{ if } I=\{i_0\}, x\in W_{i_0},\\
(\al_{i_k},\cdots,\al_{i_1}) \mapsto t_\bG(\al_{i_k})\in \si(W_{i_k})
&\mbox{ if } |I|>1.
\end{array}\right.
\end{equation}
Recall from  \eqref{eq:Bcomp1} that the morphisms in $\bB_\Kk$ are given by
$\bigsqcup_{I\subset J} W_J\times \Ga_I$ where
$$
(I,J,y,\ga):  \bigl(I,\ga^{-1}\rho_{IJ}(y)\bigr)\mapsto (J,y).
$$
If $i_k=j_\ell$ then we define $F_\Kk :W_J\times \Ga_I\to \Mor_{\bG}$ to be given by  the initial inclusion $\Tsi$.
More precisely,
we define
$$
F_\Kk\bigl((\al_{j_\ell},\cdots,\al_{j_1}),(\ga_{j_\ell},\cdots,\ga_{i_0})\bigr) = \Tsi(t(\al_{j_\ell}),\ga_{j_\ell}) \in \Mor_\bG\bigl(
\Tsi(\ga_{j_\ell}^{-1})\ t(\al_{j_\ell}),  t(\al_{j_\ell})\bigr),
$$
where $t$ denotes the target map in $\bB_\Kk$.
Similarly, if $i_k = j_p < j_{\ell}$ define 
$$
F_\Kk\bigl((\al_{j_\ell},\cdots,\al_{j_1}),(\ga_{i_k},\cdots,\ga_{i_0})\bigr) =(\al_{j_\ell}\circ \cdots\circ \al_{j_{p+1} })\in \Mor_{\bG}
\bigl(t(\al_{j_p}), t(\al_{j_\ell})\bigr).
%\quad t(\al_{j_\ell})\mapsto \ga_{j_\ell}^{-1}t(\al_{j_\ell}).
$$
It is immediate that $F_\Kk$ is a functor that extends to an equivalence from the groupoid extension $\bG_\Kk$ of $\bB_\Kk$ to $\bG$.

This shows that every orbifold has an atlas of the required type. 
 To see that this atlas is unique up to commensurability, note first that 
any two atlases constructed in this way from the same  groupoid are directly commensurate.  More generally,
 suppose given  groupoid structures  $(\bG,f),(\bG',f')$ on $Y$ with
 common refinement  $$
 F:(\bG'',f'')\to (\bG,f), \quad F':(\bG'',f'')\to (\bG',f'),
 $$
  where
$F:\Obj_{\bG''}\to \Obj_\bG$ and $F':\Obj_{\bG''}\to \Obj_\bG$ are local diffeomorphisms.
Choose an atlas $\Kk''$  on $\bG''$ with basic charts $\bigl((W_i'',\Ga_i)_{i\in A})$ where for each $i$  the group $\Ga_i$ is the stabilizer subgroup of some point $x_i\in W_i''$.  Then, for each $1\le i\le N$, the map
$F:W_i''\to W_i: = F(W_i'')\subset \Obj_{\bG}$ is injective because $F$ induces an isomorphism  $\Ga_i: = \Mor_{\bG''}(x_i,x_i)\to \Mor_\bG(F(x_i),F(x_i))$ and an injection on the quotient $\qu{W''_i}{\Ga_i}\to Y$.
Therefore the basic charts $\bigl((W_i'',\Ga_i)_{i\in A})$ are pushed forward diffeomorphically by $F$  to a family of basic charts
$\bigl((W_i,\Ga_i)_{i\in A})$  in $\bG$.     Further, it is immediate from the construction of the corresponding atlases 
$\Kk''$ and $\Kk$  from the categories $\bG''$ and $\bG$ that $F$ induces an isomorphism between them.  
Hence all atlases on $Y$ that are constructed from $\bG$ or from $\bG'$ are commensurate to this atlas that is pushed forward from $\bG''$, and hence they all belong to the same commensurability class.

Conversely, we must show that if $\Kk, \Kk'$  are commensurate,
 the groupoids $\bG_{\Kk}$ and $\bG_{\Kk'}$ are equivalent.
It suffices to consider the case when $\Kk, \Kk'$ 
are  directly commensurate.  But then they are contained in a common atlas $\Kk''$ on $Y$ that defines a groupoid $\bG_{\Kk''}$
that contains both $\bG_{\Kk}$ and $\bG_{\Kk'}$ as subgroupoids with the same realization $Y$.  Thus the inclusions 
 $\bG_{\Kk}\to \bG_{\Kk''}$ and  $\bG_{\Kk'}\to \bG_{\Kk''}$ are equivalences.  This completes the proof.
\end{proof}

\begin{rmk}\rm  The above construction for the atlas $\Kk$ depends on a choice of ordering of the basic charts $(\bK_i)_{i\in A}$.  If we change this order, for example, by interchanging the order of $1$ and $2$, then it is not hard to show that
the resulting atlas $\Kk'$ is isomorphic to $\Kk$, but not in a way compatible with the functor 
$F_\Kk:\bB_\Kk\to \bG$ defined in \eqref{eq:FF}.  Indeed, each atlas $\Kk, \Kk'$ has the same basic charts, so that $F_\Kk=F'_{\Kk'} = \si$ on each $W_i$.
Moreover, the  transition charts  $\bK_I, \bK'_I$ contain precisely the same tuples as long as $\{1,2\}\not\subset I$. 
However, $W_{12} = \Mor_\bG(W_1,W_2)$ with $F_\Kk(W_{12})\subset W_2$ while 
$W_{12}': = \Mor_\bG(W_2,W_1)$ with $F'_{\Kk'}(W_{12})\subset W_1$.
The only natural map $S_{12}: W_{12} \to W_{12}'$ takes  the morphism $\al\in  \Mor_\bG(W_1,W_2)$ to $\al^{-1} \in \Mor_{\bG}(W_2,W_1)
\subset \Mor_{\bG_\Kk}$. In fact for any $I$ we may define a map
$S: \Obj_{\Bb_\Kk}\to \Obj_{\Bb'_\Kk}$ by setting
\begin{itemlist}\item
$
S_I = \id: W_{I} \to W_{I}' $ if $\{1,2\}\not\subset I$;
%\item $S_{12}:  W_{12}\to W_{12}',\;\; \al\mapsto \al^{-1}$ 
\item $S_I: W_I\to W_I',\;\; (\al_{i_k}, \dots,\al_{i_3},\al_2,\al_1) \mapsto  (\al_{i_k}, \dots,\al_{i_3},\al_2\circ \al_1,  \al_1^{-1})$ if 
$I=(1,2,i_3,\dots)$, where $\al_1\in \Mor(W_1,W_2), \al_2\in \Mor(W_2,W_{i_3})$ as in \eqref{eq:WI}
\end{itemlist}
We leave it to the interested reader to check that this is $\Ga_I$-equivariant, and that it extends to  a functorial isomorphism
of the groupoid completions $\bG_{\Kk} \stackrel{\cong}\to\bG_{\Kk'}$.
\hfill$\er$
\end{rmk}

\begin{rmk}\rm
The construction in Proposition~\ref{prop:orb} is reminiscent of 
that given in  \cite[\S4]{Mbr} for
the  resolution of an orbifold.  
However, the two constructions have  different aims: here we want to build a simple model for $Y=|\bG|$,
 while there we wanted to construct a nonsingular  \lq\lq resolution", i.e.
 a corresponding weighted  branched manifold with the same fundamental class.
 We explain below how 
our current methods  simplify the construction of such a resolution.
$\hfill\er$  \end{rmk}

\begin{example}\label{ex:gerbe}\rm {\bf (Noneffective orbifold structures on $S^2$.)} 
Consider an orbifold structure $\bG$ on $Y=S^2$ that locally has the form $\R^2/\Ga$ where 
%D given by a noneffective action of 
$\Ga: = \Z/2\Z$ acts trivially.  
These are classified by the topological type of the corresponding classifying space $B\bG$ (see \cite{ALR}), which is  a bundle over $S^2$ with fiber $B\Z_2 = K(\Z/2,1)$.  Hence there are two such orbifolds,  
the trivial orbifold 
which has an atlas $\Kk^{triv}$ with a single chart $(S^2,\Z_2, \psi=\id)$  and one other.  They
can be distinguished either by an 
element in $H^2(S^2,\Z_2)\cong \Z_2$ or by the fact that in the trivial case the bundle $ B\bG\to S^2$ has a section. 
One can see both  these kinds of twisting from suitable atlases.
%One easy way to see the nontrivial structure is to

For example, consider an atlas with two basic charts with footprints 
equal to discs $(F_i)_{i=1,2}$ that intersect in an annulus $F_{12}$.  If each has the trivial action of $\Ga_i = \Z_2$,
we may identify the domains $W_i$ with $F_i$ via the footprint maps $\psi_i$, and hence identify
the covering maps $\rho_{i,12}: W_{12}\to W_{i,12}\subset W_i$ with the footprint map
$\psi_{12}:  W_{12}\to F_{12}$.  If $\psi_{12}$ is the nontrivial $2$-fold covering of the annulus, one can easily see that the
boundary map $\pi_2(S^2)\to \pi_1(B \bG_\Kk)$ of the fibration $B \bG_\Kk\to S^2$
is nonzero, so that this  atlas describes the nontrivial orbifold.

On the other hand, suppose we choose an atlas whose footprints $F_I$ are all contractible.  
Then $W_I$ is a union of $2^{|I|-1}$ copies of $F_I$ that are permuted by the action of $\Ga_I$, with the diagonal subgroup acting trivially. For example, the basic charts have $W_i\cong F_i$,
 the charts with $|I| = 2$ have $W_I$ equal to two copies of $F_I$ that are  permuted by the actions of $\Ga_i, i\in I$,
 while the charts with with $|J|=3$ have $W_J$ equal to $4$ copies of $F_J$.
 From this information, we can build a \v{C}ech cocycle representative
 $(\al_J: F_J\to \Z_2)_{|J|=3}$ for an element of $H^2(S^2,\Z_2)$
 by choosing one component $W_I^0$ of $W_I$  for each $|I|=2$, and then defining $\al_J: = 0$ if there is a component
 $W_J^0$
 of $W_J$  such that $\rho_{IJ}(W_J^0) = W_I^0$ for all $I\subset J, |I|=2$, and setting $\al_J: = 1$ otherwise.
 Notice that this information captures the structure of the triple intersections since there are only two possibilities:
 if $J = \{j_1,j_2,j_3\}$, then, because the groups $\Ga_{J\less  j_k}$ act freely on $W_J$ for $k=1,2,3$, there is precisely one component 
 of $W_J$ that projects to $W^0_{J\less i_k}$ for $k = 1,2$ and it either does or does not map to $W^0_{J\less i_3}$.
 If we suppose in addition that all fourfold intersections are empty, then $(\al_J)$ is a cocycle.  Moreover, 
 it represents the trivial cohomology class if and only if one can choose a family of components $W_I^0$ of the domains
that are compatible with the projections $\rho_{IJ}$ and hence form the space of objects of a nonsingular\footnote
 {
 i.e. there is at most one morphism between any two objects}  subgroupoid 
$\bG^0_{\Kk}$
of $\bG_{\Kk}$ with realization $S^2$. Since the classifying space of such a subgroupoid $\bG^0_{\Kk}$ would provide a section of the bundle
$B \bG_\Kk\to S^2$, the  triviality of the cocycle implies that the atlas defines the trivial orbifold structure on $S^2$
Conversely one can check that if the groupoid $\bG_\Kk$ defines the trivial structure  then it has a nonsingular 
subgroupoid with realization $S^2$: indeed, since such subgroupoids can be pulled back and pushed forward by equivalences,
 any groupoid that is Morita equivalent to the trivial groupoid $\bG_{\Kk^{triv}}$ contains such a subatlas.  Thus the 
  cocycle described above does classify these orbifold structures.
  
In the above discussion we assumed for simplicity that all fourfold intersections are empty. However, it is not hard to check that 
$(\al_J)$ is always a cocycle so that the above argument goes through for any cover of  $S^2$.
For this, we must show that, for every $K$ with $ |K|=4$, an even number of the four terms $\al_{K\less j_k}$ are
zero. To this end, consider $K = \{1,2,3,4\}$ and suppose that $\al_{123} = 0$.  Let $W_{123}^0$ be the component of $W_{123}$ that projects to $W_{12}^0, W_{13}^0,W_{23}^0$ and let $W_{1234}^0$ be the unique component of $W_{1234}$ that projects to 
 $W_{123}^0$ and $W_{14}^0$.  If in addition it projects to  $W_{k4}^0$ for $k=2$ or $k=3$  then its image in $W_{1k4}$ projects to
 $W_{1k}^0, W_{14}^0,W_{k4}^0$ so that $\al_{1k4} = 0$.    But if  $W_{1234}^0$ projects to neither of $W_{24}^0,W_{34}^0$ then its image
 $\ga_4W_{1234}^0$ under the nontrivial element $\ga_4\in \Ga_4$ projects to  $W_{23}^0, W_{24}^0,$ and $W_{34}^0$ so that $\al_{234}=0$.
 Therefore, at least two of the $\al_{K\less j_k}$ vanish.  On the other hand if three of them vanish, say $\al_{123},\al_{124}, \al_{134}$, then the component
$W_{1234}^0$ defined above must project to $W_{K\less k}^0$ for $k=2,3,4$ and hence project to all $W_{ij}^0$ for $i,j\in \{1,2,3,4\}$.  Therefore
we may take $W_{234}^0$ equal to its image in $W_{234}$; in other words $\al_{234}=0$ as well.
  $\hfill\er$
  \end{example}

\section{Applications}\label{s:app}
   We give two applications of our methods, first showing how the zero set construction in \cite{MW3} gives a simple way to 
construct a nonsingular resolution of an orbifold, and second using this to construct a weighted branched manifold that represents the Euler class of an orbibundle.
  
 We begin by defining the notion of a {\bf resolution} of an ep groupoid $\bG$.    This is obtained from a groupoid by 
 first passing to a suitable Morita equivalent groupoid by pulling back via an open cover of the objects (a process called {\bf reduction})
 and then
discarding some of its morphisms.
%, in the same way that a Morita equivalent groupoid is obtained by throwing away  some of its objects (or more precisely, by pulling back via an open cover of the objects).  
The idea is to obtain a \lq\lq simpler" groupoid that still has the same fundamental class; the groupoid is simpler in the sense that all stabilizers are trivial, however, because it is not proper, one must
 control its branching as explained below.
  
 First recall that the realization of an ep groupoid $\bG$  carries a 
  weighting function $\La_G:|\bG|\to \Q^+$ with values in the positive rational numbers $\Q^+$,
  given by:  $\La_G(y) = \frac 1{|\Ga^y|}$, where $|\Ga^y|$ is the order of the stabilizer subgroup $\Ga^y$ at one (and hence any) preimage of 
  $y$ in $\Obj_{\bG}$.  If $\bG$ is oriented and compact, the set of points $|\bG|^*$ where $|\Ga^y|$ is locally constant is open and dense, with complement of codimension $\ge 2$, and hence carries a fundamental class that can be represented by the singular cycle obtained by triangulating $|\bG|^*$, giving each top dimensional simplex $\si$ the weight $\La_G(y), y\in \si$. (For more details, see \cite{MPr,Mbr}.)
 
Roughly speaking, a  {\bf resolution} of an oriented compact  ep groupoid  $\bG$ is a tuple
 $(\bV,\La_V,F)$ consisting of  
 \begin{itemize}\item[-]  an oriented nonsingular \'etale groupoid $\bV$ 
 (more precisely a wnb groupoid) whose realization  carries a weighting function $\La_V:|\bV|_\Hh\to \Q^+$ where $|\bV|_\Hh$ is the maximal Hausdorff quotient of $|\bV|$ (defined below), together with
 \item[-]  an orientation preserving  functor $F: \bV\to \bG$ that induces a surjection $|F|: |\bV|_\Hh\to |\bG|$ and is such that
 $F_*(\La_V)=\La_G$, where  the pushforward   $F_*(\La_V):|\bG|\to \Q^+$ is given by
 $F_*(\La_V) = \sum_{x\in F^{-1}(y)} \La_G(y)$.
 \end{itemize}
  As in \cite{Mbr}, one can define the notion of the fundamental class of  $(\bV,\La_V,F)$, and show that under these circumstances $F$ pushes this fundamental 
  class forward to that of $\bG$. 
    
   To make the above  precise, we must define a wnb groupoid.  Because these
 are in general not  proper, the realization $|\bG|$ may not be Hausdorff, and we write 
$|\bG|_{\Hh}$ for its {\bf maximal Hausdorff quotient}. Thus  $|\bG|_\Hh$ is a Hausdorff quotient of $|\bG|$ that  satisfies the 
 following universal property: any continuous map from $|\bG|$ to a Hausdorff  space $Y$ factors though 
 the projection
 $|\bG|\to |\bG|_\Hh$.  (The existence of such a quotient for any topological space is proved in \cite[Lemma~3.1]{Mbr}; see  \cite[Appendix]{MW3} for a more detailed argument.) 
There are natural maps:
$$
\pi_\bG:\Obj_{\bG}\to |\bG|,\qquad   \pi_{|\bG|}^\Hh: |\bG|\longrightarrow |\bG|_\Hh, \qquad 
\pi^\Hh_\bG:= \pi_{|\bG|}^\Hh\circ \pi_\bG : \Obj_{\bG}\to |\bG|_\Hh .
 $$
Moreover, for $U\subset \Obj_\bG$ we write $|U| := \pi_\bG(U) \subset |\bG|$ and $|U|_\Hh := \pi_\Hh(U)\subset |\bG|_\Hh$.
The {\bf branch locus} of $\bG$ is defined to be the subset of $|\bG|_\Hh$ consisting of points with more than one preimage in $|\bG|$.

 \begin{defn} \label{def:wnb}
A   {\bf weighted nonsingular branched groupoid} (or {\bf wnb groupoid} for short) of dimension $d$
is a pair $(\bG,\La_G)$ consisting of an oriented, nonsingular \'etale  groupoid 
$\bG$ of dimension $d$, together with a rational weighting function $\La_G:|\bG|_{\Hh}\to \Q^+: = \Q\cap (0,\infty)$ that satisfies the following compatibility conditions.
For each $p\in |\bG|_{\Hh}$ there is an open neighbourhood $N\subset|\bG|_{\Hh}$ of $p$,
a collection  $U_1,\dots,U_\ell$ of disjoint open subsets of $(\pi_{\bG}^{\Hh})^{-1}(N)\subset \Obj_{\bG}$ (called {\bf local branches}), and a set of positive rational weights $m_1,\dots,m_\ell$ such that the following holds: 
\SSS

\NI
{\bf (Covering) } $( \pi_{|\bG|}^{\Hh})^{-1}(N) = |U_1|\cup\dots \cup |U_\ell| \subset |\bG|$;
\SSS

\NI
{\bf (Local Regularity)}  
for each $i=1,\dots,\ell$ the projection 
$\pi_{\bG}^{\Hh}|_{U_i}: U_i\to |\bG|_{\Hh}$ is a homeomorphism onto a relatively closed subset of $N$;
\SSS

\NI
{\bf (Weighting)}  
for all $q\in N$, 
the number 
$\La_G(q)$ is the sum of the weights of the local
branches whose image contains $q$:
$$
\La_G(q) = 
\underset{i:q\in |U_i|_{\Hh}}\sum m_i.
$$
\end{defn}

 Further we define a 
{\bf weighted branched manifold} of dimension $d$ to be a pair $(Z, \La_Z)$ consisting of a topological space $Z$ together 
with a function $\La_Z:Z\to \Q^+$ 
and an equivalence class\footnote{
The precise notion of equivalence is given in \cite[Definition~3.12]{Mbr}. In particular it ensures that the induced function $\La_Z: = \La_\bG\circ f^{-1}$  the dimension of $\Obj_{\bG}$ and the pushforward of the fundamental class are the same for equivalent structures $(\bG,\La_G, f)$. However, it does not preserve the local branching structure of $Z$.  
} 
of wnb $d$-dimensional groupoids $(\bG,\La_G)$ and homeomorphisms $f:|\bG|_\Hh\to Z$ that induce the function $\La_Z = \La_G\circ f^{-1}$.
Analogous definitions of a wnb cobordism groupoid (always assumed to be compact and  have collared boundaries) and of a weighted branched cobordism 
are spelled out in \cite[Appendix]{MW3}.    We say that two compact weighted branched manifolds $(\p^\al Z,\La^\al)_{\al=0,1}$
are {\bf cobordant} if they form the oriented boundary of a  weighted branched cobordism.

\begin{example}\rm  (i)  A compact weighted branched manifold
of dimension $0$  consists of
a finite set of points $Z=\{p_1,\ldots,p_k\}$, each with a positive rational weight $m(p_i)\in \Q^+$  and orientation $\mathfrak o(p_i)\in \{\pm\}$. 
\MS

\NI (ii) The prototypical example of a $1$-dimensional weighted branched cobordism $(|\bG|_\Hh,\La)$
has  $\Obj(\bG)=I\sqcup I'$ equal to two copies of the interval $I=I'=[0,1]$ with nonidentity morphisms from $x\in I$ to $x\in I'$ for $x\in [0,\frac 12)$ and their inverses,
 where we suppose that $I$ is oriented in the standard way.
 Then the realization and its Hausdorff quotient are
$$
\begin{array}{cl}
 |\bG| &= \;\qq{I\sqcup I'}{\bigl\{(I,x)\sim (I',x) \; \text{iff}\;  x\in [0,\tfrac 12)\bigr\}},\\
 |\bG|_\Hh& = \;\qq{I\sqcup I'}{\bigl\{(I,x)\sim (I',x) \; \text{iff}\;  x\in [0,\tfrac 12]\bigr\}},
\end{array}
$$
and the branch locus is a single point $\Br(\bG)=
\bigl\{[I,\frac 12]=[I',\frac 12]\bigr\}
\subset |\bG|_\Hh$.
The choice of weights $m, m'>0$ on 
the two local branches
$I$ and $I'$ determines the weighting function $\La: |\bG|_\Hh \to (0,\infty)$ as
$$
\La([I,x])
 = \left\{
 \begin{array}{ll} 
 m+m'  & \mbox{ if }\;\; x\in [0,\frac 12],  \vspace{.1in}\\
m &\mbox{ if }\;\;  x\in (\frac 12,1],
\end{array}\right. 
\qquad
\La([I', x]) 
 = \left\{
 \begin{array}{ll} 
m+m'  & \mbox{ if }\;\; x\in [0,\frac 12],  \vspace{.1in}\\
m'  &\mbox{ if }\;\;  x\in (\frac 12,1].   
\end{array}\right. 
$$
\NI (iii)  It is not hard to see that
a wnb groupoid $\bZ: = \bigl( (p_i), m, {\mathfrak o}\bigr)$ of dimension $0$ is cobordant either to the empty  groupoid
 (if $\la: =\sum_i {\mathfrak o}(p_i) m(p_i) = 0$) or to a groupoid with one point $p$,  weight $m(p): = |\la|$ and orientation
 ${\mathfrak o}(p)$ given by the sign of $\la$.
Indeed suppose that $$
\la^+: = 
\sum_{i:
{\mathfrak o}(p_i) =+} m(p_i)\; >\; \la^-: = \sum_{i:
{\mathfrak o}(p_i) =-} m(p_i).
$$
Then 
one can first build a cobordism as in (ii) from $\bZ$ to a groupoid with two points, $p^+$ with label $(\la^+,+)$ and $p^-$ with label $(\la^-,-)$, then split $p^+$ into two labelled points  $(q_1,\la^+-\la^-,+), (q_2, \la^-,+)$ and then \lq\lq cancel"
$(q_2, \la^-,+)$ with $(p_-, \la^-,-)$ by joining them with an arc.  The other cases are similar. 
Thus in dimension $0$  the only cobordism  invariant  of a wnb groupoid is the total weight $\sum {\mathfrak o}(p_i) m(p_i)$.
$ \hfill\er$
\end{example}

Before constructing the resolution we need one further notion.  We restrict to the compact case for simplicity.

\begin{defn}\label{def:resol}
Let $(F_i)_{i=1,\dots,N}$ be an open covering of a space $Y$, and for $I\subset \{1,\dots,N\}$ denote $F_I: = \bigcap_{i\in I} F_i$.
A collection of open sets $(Q_I)_{I\subset \{1,\dots,N\}}$ is called a {\bf cover reduction} of $(F_i)$ if
\begin{itemize}\item $Q_I$ is a precompact subset of $F_I$ for all $I$, written $Q_I\sqsubset F_I$;
\item $\bigcup_I Q_I = Y$;
\item $\ov Q_I\cap \ov Q_J\ne \emptyset \Longrightarrow \bigl(I\subset J \mbox { or } I\subset J\bigr)$.
\end{itemize}
\end{defn}

It is well known that every finite open cover of a normal topological  space has a cover reduction: see for example  
\cite[Lemma~5.3.1]{MW1} for a proof.
\MS

  Let $\Kk$ be a strict  orbifold atlas on a compact oriented orbifold $Y$ with footprint covering $(F_i)_{i=1,\dots,N}$ and charts indexed by $\Ii_Y$, 
and let $\bB_\Kk$ be the  corresponding category with groupoid completion $\bG_\Kk$.  Choose a reduction $(Q_I)_{I\in \Ii_Y}$ of the footprint cover, and define\footnote
{
We write $\TV_{IJ}$ here to emphasize that, in distinction to the set $W_{IJ} = W_I\cap \psi_I^{-1}(F_J) \subset W_I$,
we have $\TV_{IJ}\subset V_J$.  This notation is  consistent with \cite{MW3,Mcn}. Note also that $\TV_{JJ} = V_J$. 
}
 $$
V_I: = \psi_I^{-1}(Q_I)\sqsubset W_I,\quad \TV_{IJ}: = V_J\cap \psi_J^{-1}(Q_I)\sqsubset W_J,\;\; \forall I\subset J.
$$
\begin{defn} The resulting collection of sets $\Vv: = (V_I)_{I\in \Ii_Y}$ is called a {\bf reduction} of the atlas.
\end{defn}

Given a reduction $\Vv$,  consider the 
 subgroupoid $\bV_\Kk\subset \bG_\Kk$ with
\begin{equation}\label{eq:Vv}
\Obj_{\bV_\Kk}: = \bigsqcup_{I\in \Ii_Y} V_I, \qquad \Mor_{\bV_\Kk}: = \bigsqcup_{I,J\in \Ii_Y} \Mor_{\bV_\Kk}(V_I,V_J),
\end{equation}
\vspace{-.05in} 
 where
\begin{itemize} \item if $I\subset J$ then
$\Mor_{\bV_\Kk}(V_I,V_J)=\bigcup_{\emptyset\ne K\subset I} ( \TV_{KJ}\cap  \TV_{IJ}) \times \Ga_{I\less K}\subset \Mor_{\bG_\Kk}(V_I,V_J)$ 
\item if $I\supset J$ then $\Mor_{\bV_\Kk}(V_I,V_J) = \{\mu^{-1}: \mu\in \Mor_{\bV_\Kk}(V_J,V_I)\}$.
\item  $\Mor_{\bV_\Kk}(V_I,V_J) = \emptyset $ otherwise.
\end{itemize} 
Note that $\bV_\Kk$ is {\it not} a full subcategory of $\bG_\Kk$: for example, we do not include all the morphisms $V_J\times \Ga_J$ from $V_J$ to $V_J$ but (besides the identities) just those with source (and hence target) in one of the sets $\TV_{IJ}, I\ne J,$ 
and over these points we  include only the action of the subgroup $\Ga_{J\less I}$,  which by 
definition of an atlas, is {\it free}.  This is the key reason why $\bV_\Kk$ is nonsingular.
Another way of understanding $\bV_\Kk$ is to see that its morphisms are generated by the projections $\rho_{IJ}: \TV_{IJ}\to V_I$.  
When $I\subsetneq J$,
each $x\in \rho_{IJ}(\TV_{IJ})$
has preimage $\rho_{IJ}^{-1}(x) $ consisting of the free orbit $\Ga_{J\less I}(\Tx)$ for $\Tx \in\rho_{IJ}^{-1}(x) \subset \TV_{IJ}$, and we recover the action $(J,\ga^{-1}\Tx)\mapsto (J,\Tx)$ of 
$\Ga_{J\less I}$ on $\TV_{IJ}$  as the set of composites
\begin{align*}
(J,J,  \Tx, \ga) &\ = (I,J, \ga^{-1} \Tx,\id)^{-1}\circ  (I,J,  \Tx,\id),  \\
(J,\ga^{-1}\Tx)&\ \mapsto \bigl(I,\rho_{IJ}(\ga^{-1}\Tx)\bigr)\ =\ \bigl(I, \rho_{IJ}(\Tx)\bigr)\mapsto (J,\Tx).
\end{align*}
where we use the notation in \eqref{eq:Bcomp}, and in particular categorical order for composites. 

Here is the main result about the groupoid $\bV_\Kk$ from \cite[Thm.~3.2.8]{MW3}.

\begin{prop}\label{prop:V}
For each  orbifold atlas $\Kk$ on $Y$, the following statements hold.
 \begin{enumerate}
\item The groupoid $\bV_\Kk$ is well defined, in particular its set of morphisms is closed under composition and taking the inverse.  
\item Its maximal Hausdorff quotient $|\bV_\Kk|_\Hh$ is the realization of the \'etale groupoid  $\bV_\Kk^\Hh$  obtained from $\bV_\Kk$ by closing its space of morphisms in 
$\Mor_{\bG_\Vv}$, where $\bG_\Vv$ is the full subcategory of $\bG_\Kk$ with objects $\Vv : = \bigsqcup_I V_I$.
\item $\bV_\Kk$ may be given the structure of a wnb groupoid with   weighting function given at  $y\in \pi_{\bV_\Kk}^{\Hh}(V_J)$  by
$$
\La_V(y) = \frac{ n(y)}{|\Ga_{J}|}, \qquad 
n(y) = \# \{x\in V_J \ | \ \pi_{\bV_\Kk}^{\Hh}(x) = y\}.
$$
Further, for $y\in \pi_{\bV_\Kk}^{\Hh}(V_J)$ the inverse image $V_J\cap (\pi_{\bV_\Kk}^{\Hh})^{-1}(y)$ is a free $\Ga_{J\less I_y}$-orbit, where
$I_y : = \min \{I\subset J \ | \ y\in \ov{\pi_{\bV_\Kk}^{\Hh}(V_I)}\}$.
\item The inclusion $\bV_\Kk\to \bG_\Kk|_{\Vv}$  extends to an inclusion $\io_{\bV_\Kk^\Hh}: \bV_\Kk^\Hh\to \bG_\Kk|_{\Vv}$.
Moreover the pushforward of $\La_V$  by  $|\psi|\circ |\io_{\bV_\Kk^\Hh}|: |\bV_\Kk|_\Hh= |\bV_\Kk^\Hh| \to  Y$ is $\La_Y$.
\end{enumerate}
\end{prop}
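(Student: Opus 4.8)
The plan is to follow the argument of \cite[Thm.~3.2.8]{MW3}, organised around a single combinatorial input coming from the third axiom of a cover reduction: for every point $y$ the index set $\Ii_y:=\{I\in\Ii_Y:y\in\ov{Q_I}\}$ is totally ordered by inclusion, hence has a unique minimal element. I would deduce (1) directly from the description \eqref{eq:Vv}. Given composable $\mu\in\Mor_{\bV_\Kk}(V_I,V_J)$ and $\nu\in\Mor_{\bV_\Kk}(V_J,V_L)$, the common source/target point lies over $\ov Q_I\cap\ov Q_J\cap\ov Q_L$, so $I,J,L$ are totally ordered by inclusion; after reducing (using the inverse of (iii)) to the case $I\subset J\subset L$, the composite is the one already defined in $\bG_\Kk$ by Proposition~\ref{prop:groupcomp}, and one checks that its source, target and group component still lie in the prescribed $\TV$-sets and free subgroups. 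Associativity and closure under the inverse are then inherited from $\bG_\Kk$.

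For (2), I would first verify that the closure of $\Mor_{\bV_\Kk}$ inside $\Mor_{\bG_\Vv}$ is again an \'etale groupoid over $\bigsqcup_I V_I$: locally $\Mor_{\bV_\Kk}$ is a finite union of graphs of the \'etale maps $\rho_{IJ}$ composed with local $\Ga$-actions, so its closure only adds finitely many limiting graphs, and it is closed under composition and inverse because $\bG_\Vv$ is. This groupoid $\bV_\Kk^\Hh$ has Hausdorff realization because its morphism space is closed. That $|\bV_\Kk^\Hh|$ is the \emph{maximal} Hausdorff quotient of $|\bV_\Kk|$ is then the universal property: any continuous $g:|\bV_\Kk|\to X$ into a Hausdorff space agrees on the two endpoints of any limiting morphism (approach it by genuine $\bV_\Kk$-morphisms, along which $g$ is already constant), hence factors through $|\bV_\Kk^\Hh|$.

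Part (3) is the technical core, and I expect it to be the main obstacle. The crucial claim is that, for $y\in\pi_{\bV_\Kk}^\Hh(V_J)$, the set $V_J\cap(\pi_{\bV_\Kk}^\Hh)^{-1}(y)$ is a single free $\Ga_{J\less I_y}$-orbit, where $I_y=\min\{I\subset J:y\in\ov{\pi_{\bV_\Kk}^\Hh(V_I)}\}$. I would prove this by tracking the generating morphisms $\rho_{IJ}:\TV_{IJ}\to V_I$: over $y$, the projection $\rho_{I_yJ}$ restricts to a free $\Ga_{J\less I_y}$-cover of one point of $V_{I_y}$, and the nesting of the $Q_I$ forbids any further gluing inside $V_J$ --- this is precisely where one needs the freeness of the actions of the $\Ga_{J\less I}$ built into the definition of an atlas. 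Granting this, for $p\in|\bV_\Kk|_\Hh$ choose any $J$ with $p\in\pi_{\bV_\Kk}^\Hh(V_J)$, a sufficiently small neighbourhood $N$ of $p$, and take the local branches $U_1,\dots,U_\ell$ (with $\ell=|\Ga_{J\less I_p}|$) to be small disjoint neighbourhoods in $V_J$ of the $\ell$ points of $V_J\cap(\pi_{\bV_\Kk}^\Hh)^{-1}(p)$, each carrying weight $1/|\Ga_J|$. Then Covering holds because every $V_L$ with $L\in\Ii_p$ has $L$ comparable to $J$ and, near $p$, maps into $|V_J|$ via the appropriate $\rho$; Local Regularity is the injectivity of $\pi_{\bV_\Kk}^\Hh$ on each $U_i$, again from the freeness claim; and the Weighting axiom reads $\La_V(q)=\#\bigl(V_J\cap(\pi_{\bV_\Kk}^\Hh)^{-1}(q)\bigr)/|\Ga_J|=n(q)/|\Ga_J|$, which also shows this value equals $1/|\Ga_{I_q}|$, independent of $J$.

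Finally, for (4), the inclusion $\io_{\bV_\Kk^\Hh}:\bV_\Kk^\Hh\to\bG_\Kk|_\Vv$ is simply the inclusion of the closed morphism space produced in (2), so there is nothing to construct. For the weighting statement, write $F=|\psi|\circ|\io_{\bV_\Kk^\Hh}|$, fix $z\in Y$, and let $I_0$ be the minimal element of $\{I:z\in Q_I\}$. For any $J$ with $z\in Q_J$ (so $I_0\subset J$) and any $x\in\psi_J^{-1}(z)$ one has $x\in\TV_{I_0 J}$, so $(I_0,J,x,\id)$ is a morphism of $\bV_\Kk$ with $[J,x]=[I_0,\rho_{I_0 J}(x)]$; hence $F^{-1}(z)=\pi_{\bV_\Kk}^\Hh\bigl(\psi_{I_0}^{-1}(z)\bigr)$, and every preimage in $V_{I_0}$ of a point of $F^{-1}(z)$ lies in $\psi_{I_0}^{-1}(z)$. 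Since $\psi_{I_0}^{-1}(z)$ is a single $\Ga_{I_0}$-orbit of cardinality $|\Ga_{I_0}|/|\Ga^z|$ (the stabiliser of any $x\in\psi_{I_0}^{-1}(z)$ being isomorphic to $\Ga^z$ by the remark following Definition~\ref{def:orbat}), and $\La_V(y)=n(y)/|\Ga_{I_0}|$ with $\sum_{y\in F^{-1}(z)}n(y)=|\psi_{I_0}^{-1}(z)|$, we obtain
$$
F_*(\La_V)(z)=\sum_{y\in F^{-1}(z)}\La_V(y)=\frac{|\psi_{I_0}^{-1}(z)|}{|\Ga_{I_0}|}=\frac{1}{|\Ga^z|}=\La_Y(z).
$$
In practice the verifications in (2) and (3) are the substantive ones, and I would reproduce the detailed analysis of \cite[\S3.2]{MW3} in the present notation.
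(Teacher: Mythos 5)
Your proposal is correct and follows essentially the same strategy as the paper's own (very brief) sketch, which likewise rests on the nesting property of the cover reduction, the closure of $\Mor_{\bV_\Kk}$ inside $\Mor_{\bG_\Vv}$ by frontier morphisms, and the picture of each $V_J$ as a branch of weight $1/|\Ga_J|$ wrapped by the free partial $\Ga_{J\less I}$-actions; your explicit pushforward computation in (4) is a welcome addition of detail. The only small imprecision is in (1): inverting morphisms reduces the composition to the cases where $I,J,L$ form a chain with $J$ in the middle \emph{or} with $J$ maximal (e.g.\ $I\subset L\subset J$), and the latter case must also be checked against the definition of $\Mor_{\bV_\Kk}$, but this is routine.
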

\begin{proof}  We  sketch the proof very briefly; \cite[\S3.4]{Mcn} gives more detail, while the full proof is in \cite[\S3.2]{MW3}.
The first claim is not hard to prove from the  remarks after the definition of $\bV_\Kk$.  To prove (ii) it suffices to check that the closure of $\Mor_{\bV_\Kk}$ in 
$\Mor_{\bG_\Kk}$ defines a set of morphisms that is closed under composition.  This holds for much the same reason as (i) because, as is easily seen,
one can close $\Mor_{\bV_\Kk}$ by adding in morphisms of the following type from $V_I$ to $V_J$:
$$
\bigcup_{F\subsetneq I}  \bigl(\TV_{IJ}\cap \Fr_{V_J}(\TV_{FJ})\bigr) \times \Ga_{I\less F}\;\subset \; V_J\times \Ga_I,
$$
where $\Fr_V(A):  = cl_V(A) \less A$ and $\cl_V(A)$ is given by the closure of $A$ in $V$.
Informally one can think of the sets $V_J$ as the branches of $\bV_\Kk$ each weighted by $\frac 1{|\Ga_J|}$.  However, they do not inject into $|\bV_\Kk|$ (and hence into $|\bV_\Kk|_\Hh$) --- rather they 
are wrapped around themselves by partial actions of the groups $\Ga_{J\less I}$.  One can check that 
 the branch locus is the image in $|\bV_\Kk|_\Hh$ of the sets $\Fr_{V_J}(\TV_{IJ})$ for $I\subsetneq J$.  
 The statements in (iii) then follow easily.
% Further,  the formula in (iii) is explained by the fact that,
% by the precompactness of $V_I$ in $W_I$, if $I\subset J$ then the subset of
% $ \pi_{\bG_\Kk}(V_I)\cap  \pi_{\bG_\Kk}(V_J)$ may be closed in
%$ \pi_{\bG_\Kk}(V_J)$ by adding the points along the branching locus $ \pi_{\bG_\Kk}(\Fr(\TV_{IJ}))$.
Note that although the functor
 $\io_{\bV_\Kk^\Hh}: \bV_\Kk^\Hh\to \bG_\Kk|_{\Vv}$ is injective, its image is not usually a full subcategory,  so that the induced map on realizations is not injective in general.
\end{proof}

\begin{example}\label{ex:foot3}\rm (i)  Consider the \lq\lq football" discussed in Example~\ref{ex:foot}, with  reduction $\Vv$  given by two discs
 $V_1\sqsubset W_1, V_2\sqsubset W_2$ with disjoint images $Q_i$ in $X$, together with an open annulus $V_{12}\sqsubset W_{12}$.  
 For $j = 1,2$ the sets $\TV_{j(12)}\subset V_{12}$ are disjoint open annuli that project  into
$V_j$ by a covering map of degree $3$ for $j=1$ (that quotients out by $\Ga_{(12)\less 1} = \Ga_2 = \Z_3$) and degree $2$ for $j=2$.  
Then $\Obj_{\bV_\Kk} = V_1\sqcup V_2\sqcup V_{12}$.  For $j = 1,2$ the category $\bV_\Kk$ 
has the following morphisms (besides identities);
\begin{itemize}\item morphisms  $V_j\to V_{12}$ given by the projection $\rho_{j, 12}: \TV_{j(12)} \to V_j$, together with their inverses;
\item morphisms $V_{12}\to V_{12}$ given by the action of $\Z_3 = \Ga_{(12)\less 1}$ on $\TV_{1(12)}$, resp. of $\Z_2
=\Ga_{(12)\less 2} $ on $\TV_{2(12)}$.
\end{itemize}
To obtain $\bV_\Kk^\Hh$ we add the morphisms given by the action of $\Ga_{(12)\less 1}$ on 
the boundary $\Fr_{V_{12}}(\TV_{1(12)})\subset  V_{12}\less \TV_{1(12)}$  and the action of 
$\Ga_{(12)\less 2}$ on $\Fr_{V_{12}}(\TV_{2(12)})\subset V_{12}\less \TV_{2(12)}$. 
The weighting function $\La$ is given by:
\begin{align*}
\La(p) & \ = \tfrac 12 \mbox { if } p\in \ov{Q_1} = \pi_{\bV}^{\Hh}(V_1) \cup \pi_{\bV}^{\Hh}(\ov{\TV_{1(12)}}) \\
& \ = \tfrac 13 \mbox { if } p\in \ov{Q_2}= \pi_{\bV}^{\Hh}(V_2) \cup \pi_{\bV}^{\Hh}(\ov{\TV_{2(12)}}) \\
& \ = \tfrac 16 \mbox { if } p\in Q_{12}\less (\ov {Q_1}\cup \ov{Q_2}) = \pi_{\bV}^{\Hh}(V_{12} \less \ov{\TV_{1(12)} \cup \TV_{2(12)}})
\end{align*}
Notice that for $j = 1,2$  the weighting function does not change along the boundary of the intersection 
$Q_1\cap \p \ov{Q_{12}}$,
%D $\pi_{\bV_\Kk}^{\Hh}(V_j) \cap \pi_{\bV_\Kk}^{\Hh}(\TV_{j(12)})$, 
i.e. there is no branching there, while it does change along
the internal boundaries $Q_{12}\cap \p \ov{Q_i}$  %D $ \pi_{\bV_\Kk}^H(\Fr(\TV_{j(12)}))$
 in the middle annulus $Q_{12}$. %D $\pi_{\bV_\Kk}^H(V_{12})$.
%D Note further  that
 Also, the pushforward of $\La_V$ by the map $|\io_{\bV_\Hh}|: |\bV_\Hh|\to |\bB_\Kk\big|_\Vv|$ takes the value $1$ 
except at the  poles $N,S$:
 $$
  |\io_{\bV_\Hh}|_*(\La_V)(q): = \sum_{p\in  |\io_{\bZ_\Hh}|^{-1}(q)} \ \La_\bV(p) = 1, \quad \forall q\in  Y\less \{N,S\}. 
  $$
  
  \NI (ii)  In Example~\ref{ex:gerbe} we considered the two different orbifold structures on $S^2$ with noneffective group $\Z_2$,
  constructing atlases with two basic charts
  whose footprints intersect in an annulus $F_{12}$.    They may be distinguished by
  the domain $W_{12}$, which is either  connected (the nontrivial case) or disconnected.
  Let us choose the footprint reduction so that $Q_{12}$ is a connected annulus.  Then
  because we define $V_{12}$ to be the full inverse image of $Q_{12}$ under the footprint map, it is disconnected exactly  if $W_{12} $ is.
Therefore   the two resulting weighted branched manifolds $(Z,\La_Z)$, which have  
 two-fold branching along $Q_{12} \cap \p \ov{Q_i}$
 % the  boundary circles of 
 %D $\pi^H_{\bV}(V_i)$ in $\pi^H_{\bV}(V_{12})$ 
 as in (i), may be distinguished 
  by the set of points in the realization $Z=|\bV_\Kk|_\Hh$ with weight $\frac 14$: this set is either connected (the nontrivial case) or disconnected. 
  
  Observe that each of these weighted branched manifolds is weighted branched cobordant to 
  $S^2$ with the constant weight function $\frac 12$.  
  In other words, the difference between 
  these two orbifold structures  is {\it not} preserved when we consider cobordism classes of resolutions.
  To see this, notice that in each case
  we may   add morphisms to the groupoid $\bV_\Kk$ so that it still remains nonsingular but has realization $S^2$ instead of a branched manifold: to do this we simply add one morphism between any two points $(I,x), (J,y) \in \Obj_{\bV_\Kk}$
  that have the same image under the composite map $\Obj_{\bV_\Kk}\to\Obj_{ \bG_\Kk} \to |\bG_\Kk| = S^2$
  but different images in $|\bV_\Kk|$.  (Because $\bV_\Kk$ is nonsingular there is no ambiguity about how to define composites.)
  One can check that
  this new groupoid $\bV_\Kk'$ is weighted cobordant to $\bV_\Kk$ by a cobordism groupoid  $\bC$
  obtained by adding the morphisms $[0,\frac 12)\times\bigl( \Mor_{\bV_\Kk'}\less \Mor_{\bV_\Kk}\bigr)$ to the product
  groupoid
  $[0,1]\times \bV_\Kk$ (which has objects $[0,1]\times \Obj_{\bV_\Kk}$ and morphisms $[0,1]\times \Mor_{\bV_\Kk}$).
  The Hausdorff realization of this cobordism is the union of $S^2\times [0,\frac 12]$ with weighting function $\frac 12$,  together with $(\frac 12,1]\times Z$ with weighting function $\la_Z\circ pr_Z$, where as above $Z: = |\bV_\Kk|_\Hh$.  
  $   \hfill\er$
\end{example}

\begin{rmk}\rm \label{rmk:cobordinvar} Because any two choices of cover reductions are cobordant (see \cite[Lemma~5.3.4]{MW1}), one can easily show
 that if two orbifold atlases $\Kk_0,\Kk_1$ on $Y$ are commensurate 
then any two 
 resolutions  $\Vv_{\Kk_0}, \Vv_{\Kk_1}$ that are constructed as above are  themselves weighted branched cobordant.
As Example~\ref{ex:foot3}~(ii) shows, inequivalent atlases may have  cobordant resolutions.
On the other hand, the Pontryagin numbers are invariants of weighted branched cobordism. 
To see this, note that each wnb groupoid $(\bG, \La_G)$  has a tangent bundle $\rT \bG$ that is an \'etale groupoid which
(after appropriate taming) also has a natural structure as
a wnb groupoid.\footnote
{
The issue here is that the Hausdorff completion $|\rT \bG|_\Hh$  should also form a bundle over $|\bG|_\Hh$, which is the case 
when the branch locus is sufficiently well behaved.  Such questions are discussed at length in \cite[\S3]{Mbr}, where it is  shown that
\lq\lq tame" wnb groupoids support partitions of unity, and, if compact,  support a well defined notion of the  integral of a top dimensional differential form.}
Hence one can use Chern--Weil theory to construct top-dimensional differential forms that represent products of Pontryagin classes, and then integrate them over the fundamental class of  $(\bG, \La_G)$ to obtain the Pontryagin numbers. 
More generally, one could consider the bordism groups of maps from a weighted branched manifold into a space $Y$.
See \cite[Example 9.23]{CMS} for a related discussion. (The notion of  weighted branched manifold in \cite{CMS} is closely related to ours, 
but not precisely the same.)
 \hfill$\er$
\end{rmk}

\NI{\bf Computing the Euler class.}  By definition, an oriented orbibundle $\pr:E\to X$ with fiber $E_0$ over a smooth oriented compact orbifold $X$  is the realization of a smooth functor $\pr:\bE\to \bX$ between oriented ep groupoids such that the induced map $\pi_0: \Obj_{\bE}\to \Obj_{\bX}$ on objects is a locally trivial vector bundle with fiber $E_0$.
In this situation, the orbifolds $E=|\bE|, X=|\bX|$ 
%
%Suppose given an oriented orbibundle $\pr:E\to X$ over a smooth oriented compact orbifold $X$ with fiber $E_0$.  
%By definition of an orbibundle, 
%the orbifolds 
have compatible local uniformizers.  In other words,
we may choose
 a covering of $X$ by local charts $\bigl((W_i,\Ga_i,\psi_i^X)\bigr)_{i=1,\dots,N}$ with footprints $F_i\subset X$ so that
 the action of $\Ga_i$ lifts to the pullback $(\psi^X_i)^*(E|_{F_i})$ and
    $((\psi^X_i)^*(E|_{F_i}),\Ga_i, \psi_i^E)$  (where $\psi_i^E$ lifts $\psi_i^X$) is a local uniformizer for $E$.
By Proposition~\ref{prop:orb} we may extend this family of basic charts to an orbifold atlas 
$\Kk_X $ on $X$ with  charts $\bigl((W_I,\Ga_I,\psi^X_I)\bigr)_{I\in \Ii_X}$ and
 footprint cover $(F_i)_i$.  
The orbifold $E$  has a corresponding atlas
 $\Kk_E$  with  charts $\bigl((E|_{W_I},\Ga_I,\psi^E_I)\bigr)_{I\in \Ii_X}$ and
 footprint cover $(E|_{F_i})_i\subset E$, where for simplicity  we denote the pullback 
 $(\psi^X_I)^*(E|_{F_I})$ of $E$ to $W_I$ simply by $E|_{W_I}$.
% Note that in this model the covering maps $\rho_{IJ}^E: E|_{W_J}\to E|_{W_I}$ incorporate the bundle structure of $E$.
 
 By Proposition~\ref{prop:groupcomp} the categories 
 $$
 \bB_X: = \bB_{\Kk_X },\quad  \bB_E : = \bB_{\Kk_E },
 $$
 corresponding to these orbifold atlases have completions to 
 ep groupoids    $\bG_E , \bG_X $.  It follows from the construction
 that  the projection $\pr$ induces a functor
 $\pr:\bG_E \to \bG_X $ that restricts on the object spaces to the bundle projection $
 \bigsqcup_I E|_{W_I}\to  \bigsqcup_I {W_I}$.

 By  \cite[Proposition~4.19]{Mbr},\footnote{
 This result concerns the effective case, but 
applies equally well to the noneffective case because
 each groupoid has an effective quotient; see \cite[Def~2.33]{ALR}.  However, in \cite{Mbr} we took the fundamental class 
 of $\bG$ to be that of its effective quotient, while here we use the 
 more correct version that also takes into account the order of the group that acts noneffectively.}  
  one
 way to define the Euler class of $\pi: E\to X$ is to consider a \lq\lq nonsingular  resolution" of the groupoid $\bG_X $, 
 pull the bundle $E\to X$ back to this resolution and then push forward to $X$ the (weighted) zero set of a section $\nu$ of this bundle
 that is transverse to $0$ (written $\nu\pitchfork 0$). 
 As we explained above, 
we can take the resolution  of  $\bG_X $ to be the wnb groupoid $\bV_X $ formed as in Proposition~\ref{prop:V}
from a reduction of $\bG_X $.
The pullback of  $\pr:\bG_E \to \bG_X $ by $\io_\Vv: \bV_X \to \bG_X$ is the corresponding wnb groupoid with objects 
$ \bigsqcup_I E|_{V_I }$.  Let $\nu: \bV_X\to \bE_X$ be a section of this bundle.  This is given
 by a compatible family of sections 
\begin{equation}\label{eq:nuXE}
\nu_I: V_I \to E|_{V_I },\quad 
\nu_J|_{\TV_{IJ} } = \nu_I\circ \rho^X_{IJ}.
\end{equation}
If $\nu\pitchfork 0$, there is a full subcategory  $\bZ^\nu_X $ of  $\bV_X $
whose  objects $\nu_I^{-1}(0)\subset V_I $ form a closed $d$-dimensional submanifold $\Obj_{\bZ^\nu_X}$ of $\Obj_{\bV_X}$ of codimension equal to 
the fiber dimension of $E$.
 It is not hard to check that this has the structure of a wnb groupoid $\bZ^\nu_X$ with 
 the induced weighting function $\La_Z$ equal to the restriction of $\La_V$ to the image of the inclusion
 $|\bZ^\nu_X|_\Hh\to |\bV_X|_\Hh$. 
 
 The following is  a version of results proved in \cite[\S3]{Mbr}; see also \cite[\S5.2]{Mcn}.
 
 \begin{lemma} 
 Let $E\to X$ be an oriented orbibundle  and $d: = \dim X-\dim E$.
 Then the cobordism class of the wnb groupoid $(\bZ^\nu_X,\La_Z)$ constructed above is independent of choices,
 as is the image in $H_d(X;\Q)$ of the pushforward of its fundamental class .
 \end{lemma}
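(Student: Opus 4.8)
The plan is to run a cobordism argument. Given two systems of choices $(\Kk_X^0,\Vv^0,\nu^0)$ and $(\Kk_X^1,\Vv^1,\nu^1)$ entering the construction of the zero-set groupoid, I would build a single wnb cobordism groupoid over $[0,1]\times X$ whose Hausdorff realization maps to $[0,1]\times X$ and whose two collared boundary components are the weighted zero-set groupoids $\bZ^{\nu^0}_X$ and $\bZ^{\nu^1}_X$ with their induced weightings. The cobordism-invariance statement is then immediate, and the statement in $H_d(X;\Q)$ follows because cobordant weighted branched manifolds push their fundamental classes forward to classes that differ by a boundary in $H_d([0,1]\times X;\Q)\cong H_d(X;\Q)$.

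I would assemble this cobordism in stages over $[0,1]\times X$. First, since $\Kk_X^0$ and $\Kk_X^1$ are both orbifold atlases on the fixed orbifold $X$, the bijective correspondence of Proposition~\ref{prop:orb} shows they are commensurate, hence cobordant: there is an orbifold atlas $\Kk$ on $[0,1]\times X$ restricting to $\Kk_X^\al$ on $\{\al\}\times X$, constructed as in \cite[\S6.2]{MW2}. Pulling $E\to X$ back along the projection $[0,1]\times X\to X$ gives a corresponding atlas on $[0,1]\times E$, so we may work over this fixed cobordism atlas. Next I would choose a reduction $\Vv$ of $\Kk$ restricting to the two given reductions at the ends, which is possible because any two reductions of a (cobordism) atlas are cobordant; this is proved in \cite[Lemma~5.3.4]{MW1} and used in Remark~\ref{rmk:cobordinvar}. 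Proposition~\ref{prop:V} then produces the wnb cobordism groupoid $\bV_\Kk$ together with its weighting function, restricting at the two ends to $\bV_{\Kk_X^0}$ and $\bV_{\Kk_X^1}$.

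It remains to construct a section $N$ of the pulled-back bundle $\bE\to\bV_\Kk$ that is transverse to the zero section, is product-like near the two ends with $N=\nu^\al$ over $\{\al\}\times X$, and satisfies compatibility conditions of the form \eqref{eq:nuXE}. This is the perturbation-section construction of \cite[\S7.3]{MW2}, specialized to the orbifold case where the obstruction spaces are trivial: one defines the family $(N_I)$ by induction on $|I|$, at each stage taking $N_I$ on the sets $\TV_{HI}$ (for $H\subsetneq I$) to be the pullback of the already-chosen $N_H$ under the covering maps $\rho^X_{HI}$ --- automatically transverse there, since $\rho^X_{HI}$ is a local diffeomorphism --- and then extending $\Ga_I$-equivariantly over the remainder of $V_I$ by an equivariant cutoff and a generic Sard--Smale perturbation supported away from that locus. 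The free actions of the groups $\Ga_{I\less H}$ cause no trouble because the region over which one perturbs is disjoint from the locus where that freeness is invoked. The resulting zero set $N^{-1}(0)$ is the object space of a full subcategory of $\bV_\Kk$, which inherits a wnb cobordism structure with weighting the restriction of that of $\bV_\Kk$, exactly as in Proposition~\ref{prop:V} and \cite[\S3]{Mbr}; its two ends are the groupoids $\bZ^{\nu^0}_X$ and $\bZ^{\nu^1}_X$ with their weightings. Pushing forward by $|\psi|\circ|\io|$ to $[0,1]\times X$, and using (as in \cite[\S3]{Mbr}) that this functor carries the fundamental class of the cobordism to a chain bounding the difference of the two pushed-forward fundamental classes, we conclude that those two classes agree in $H_d(X;\Q)$.

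The step I expect to be the main obstacle is the compatible-and-equivariant transversality just described: one must achieve $N\pitchfork 0$ simultaneously on all the charts $V_I$ while respecting both the tower of coordinate changes $\rho^X_{IJ}$ and the partial free actions of the groups $\Ga_{J\less I}$, and while keeping the homotopy product-like near the ends $\{0\}$ and $\{1\}$ so that the zero set is a genuine collared cobordism. The inductive scheme above accomplishes this, and is the content of the perturbation theory of \cite[\S7.3]{MW2} in the present setting; the remaining ingredients --- that zero sets of transverse compatible sections form wnb subgroupoids, and that the functor to $X$ pushes fundamental classes forward correctly --- are established in \cite[\S3]{Mbr}.
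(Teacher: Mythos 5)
Your overall architecture --- commensurability of the two atlases via Proposition~\ref{prop:orb}, a cobordism atlas on $[0,1]\times X$ as in \cite[\S6.2]{MW2}, a cover reduction interpolating between the two given ones via \cite[Lemma~5.3.4]{MW1}, the wnb cobordism groupoid from Proposition~\ref{prop:V}, an interpolating transverse section, and pushforward of fundamental classes --- is exactly the argument the paper delegates to \cite[\S3]{Mbr} and \cite[\S5.2]{Mcn}, so in that sense your route is the intended one.

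The one genuine problem is in your description of the perturbation step. You propose to extend $N_I$ over $V_I\less\bigcup_{H\subsetneq I}\TV_{HI}$ ``$\Ga_I$-equivariantly'' and then perturb generically. Full $\Ga_I$-equivariance is not required there, and imposing it would in general be fatal: the only non-identity morphisms of $\bV_\Kk$ from $V_I$ to itself lie over the sets $\TV_{HI}$ and come from the partial \emph{free} actions of $\Ga_{I\less H}$, with respect to which the pulled-back section is automatically invariant; on the complement the section is unconstrained, and that is precisely why a single-valued transverse section exists over the resolution at all. If you instead insist on $\Ga_I$-equivariance on all of $V_I$, an equivariant section can be forced to take values in the $\Ga_I$-invariant part of the fiber along a fixed-point locus, which generically obstructs transversality --- the classical difficulty that multisections are designed to circumvent and that the passage to the nonsingular groupoid $\bV_\Kk$ is designed to avoid. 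Drop that requirement --- impose only the compatibility \eqref{eq:nuXE} on the sets $\TV_{HI}$, where the section is pulled back and hence automatically compatible with the free actions, and perturb freely elsewhere --- and your inductive construction, and with it the rest of your argument, goes through as in the cited references.
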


\begin{example} \rm   Consider the  football $X$ considered in Example~\ref{ex:foot} 
with  reduction as in Example~\ref{ex:foot3}.  Its tangent bundle $\rT X$ has a corresponding atlas with  charts 
$(\rT W_I, \Ga_I, \psi^{\rT X}_I)$ and reduction $\rT \Vv$ with domains $\rT V_I$.  
Trivialize the bundle $\rT V_{12}\to V_{12}$ by choosing a nonvanishing $\Ga_{12}$-invariant section $\nu_{2}$.  This descends to a nonvanishing section of $\rT V_i|_{V_{i(12)}}$ (where $V_{i(12)} : = \rho_{i(12)}(\TV_{i(12)}) $). Since each $V_i$ is a disc, for each $i$, this section extends to a section $\nu_i: V_i\to \rT V_i$ with precisely one zero, which has weight $\frac 1 {|\Ga_i|}$  Hence the Euler class is represented by the zero dimensional branched manifold that is represented by two points, one with weight $\frac 12$ and one with weight $\frac 13$.\MS
 \hfill$\er$
 \end{example}
 
 \begin{rmk}\label{rmk:polyf}
 \rm 
 This abstract method should also apply to the infinite dimensional orbi\-bundles 
of polyfold theory~\cite{HWZ}.  Here one has an orbibundle 
whose base and total space are $sc$-Banach manifolds.  Since the moduli space $X$ of $J$-holomorphic stable maps is compact,
one can define atlases $\bB_{\Kk_X}, \bB_{\Kk_E}$ as above that are finite (i.e. have  finitely many basic charts)
and  such that $|\bB_{\Kk_X}|$ is a neighbourhood of $X$. In particular, 
the projection 
is the realization of  a functor $\pi:\bB_{\Kk_E}\to \bB_{\Kk_X}$ 
that restricts on each chart to a bundle $\pi:E_I\to U_I$ with infinite dimensional base and fibers on which the finite group $
\Ga_I: = \prod_{i\in I} \Ga_i$ acts.
  We are also given a canonical smooth section $\s: = (s_I)$ where each $s_I: U_I\to E_I$ is  a $\Ga_I$-equivariant {\it Fredholm} operator such that the realization $|\s^{-1}(0)|$ of the zero set is canonically identified with $X$.    We can choose a subgroupoid $\bV_{\Kk_X}$ of $\bG_{\Kk_X}$ as in \eqref{eq:Vv}. Then polyfold Fredholm theory  
  implies that there are single valued sections $\nu$ of the pullback bundle such that $\s|_\bV + \nu\pitchfork 0$.
  The resulting zero set $\bZ^\nu$ has domains that are $d$-dimensional manifolds, where $d$ is the Fredholm index of $\s$, and just as above is a nonsingular \'etale groupoid whose realization has a natural  weighting function.
  The proof sketched above (and given in detail in \cite{MW3}) 
  that  $\bZ^\nu$  is a weighted branched manifold relies on the existence of a similar structure of the ambient groupoid $\bV_{\Kk_X}$.  
  In the polyfold setup, $\bV_{\Kk_X}$ is infinite dimensional.    Hence, in order to complete the proof that the zero set is
  a weighted branched manifold of dimension $d$ one would have to carefully check the properties of the local branching structure of the zero set.
  However, since this is entirely controlled by the behavior of the group actions, this should pose no problem, hence giving a simple model for the virtual cycles constructed in polyfold theory.
  We hope to return to this question in the future.
   \hfill$\er$
 \end{rmk}

\NI{\bf Acknowledgements:}  I would like to thank Dominic Joyce, Eugene Lerman and John Pardon for useful discussions about orbifolds.  This paper also builds on the insights of Katrin Wehrheim and the work in our joint project on Kuranishi atlases.  I also thank the referees for their careful reading and  helpful suggestions.

\end{document}